\newcommand{\X}{\boldsymbol{\mathscr{X}}}
\newcommand{\Rmnum}[1]{\expandafter\@slowromancap\romannumeral #1@}
\newtheorem{theorem}{Theorem}[section]
\newtheorem{lemma}{Lemma}[section]
\newtheorem{remark}{Remark}[section]
\newproof{proof}{Proof}
\journal{}
\begin{document}

\begin{frontmatter}

\title{Efficient Dual ADMMs for Sparse Compressive Sensing MRI Reconstruction}



\author[a1]{Yanyun Ding}
\ead{dingyanyunhenu@163.com}
\author[a2]{Peili Li}
\ead{lipeili@whu.edu.cn}
\author[a3]{Yunhai Xiao\footnote{Corresponding author}}
\ead{yhxiao@henu.edu.cn}
\author[a1]{Haibin Zhang}
\ead{zhanghaibin@bjut.edu.cn}

\address[a1]{College of Applied Science, Beijing University of Technology,
Beijing 100124, China}
\address[a2]{School of Mathematics and Statistics, Wuhan University, Wuhan 430072, China}
\address[a3]{School of Mathematics and Statistics,
Henan University, Kaifeng 475000, China}

\begin{abstract}
Magnetic Resonance Imaging (MRI) is a kind of medical imaging technology used for diagnostic imaging of
diseases, but its image quality may be suffered by the long acquisition time.
The compressive sensing (CS) based
strategy may decrease the reconstruction time greatly, but it needs efficient reconstruction algorithms
to produce high-quality and reliable images.
This paper focuses on
the algorithmic improvement for the sparse reconstruction of CS-MRI, especially considering
a non-smooth convex minimization problem which is composed of the sum of a total variation regularization term and a $\ell_1$-norm term of the wavelet transformation.
The partly motivation of targeting the dual problem is that the dual variables are involved in relatively low-dimensional subspace.
Instead of solving the primal model as usual, we turn our attention to its associated dual model composed of
three variable blocks and two separable non-smooth function blocks.
However, the directly extended alternating direction method of multipliers (ADMM)
must be avoided because it may be divergent, although it usually performs well numerically.
In order to solve the problem, we employ a symmetric Gauss-Seidel (sGS) technique based ADMM.
Compared with the directly extended ADMM, this method only needs one additional iteration, but its convergence can be guaranteed theoretically.
Besides,  we also propose a generalized variant of ADMM because this method has been illustrated to be efficient for solving
semidefinite programming in the past few years.
Finally, we do extensive experiments on MRI reconstruction using some simulated and real MRI images
under different sampling patterns and ratios.
The numerical results demonstrate that the proposed algorithms significantly
achieve high reconstruction accuracies with fast computational speed.
\end{abstract}

\begin{keyword}
Magnetic resonance imaging; non-smooth convex minimization; compressive sensing; symmetric Gauss-Seidel iteration;  alternating direction method of multipliers.

\end{keyword}

\end{frontmatter}

\section{Introduction}
%
%
%
%
Magnetic Resonance Imaging (MRI) has become an indispensable imaging tool for diagnosing
and evaluating a host of conditions and diseases.
The MRI data acquisition system can be characterized as follows:
\begin{equation}\label{orig}
b=\mathcal{F} u+\epsilon,
\end{equation}
where $u$ denotes the vectorized MR image formed by row/column concatenation of an image matrix, $b$ is the collected $k$-space data, $\mathcal{F}$ is the Fourier operator which maps
the image space to the $k$-space,  and $\epsilon$ is the system noise assumed to be
normally distributed. The goal is to reconstruct $u$ from the given collected $k$-space data $b$.  However, this data acquisition process
is quite time consuming due to physiological and hardware constraints.

The compressive sensing (CS) \cite{crtsix,ddseven} MRI is an effective approach allowing for
data sampling much lower without significantly degrading the image quality. Let $\mathcal{P}$ be an under-sampling mask on the $k$-space as shown in \cite{gbkthirteen,jsktwelve,psleleven}, and further let $K:=\mathcal{P}\mathcal{F}$, i.e., $K:\mathbb{R}^d\rightarrow\mathbb{R}^p$ $(p\ll d)$ is a partial Fourier transform matrix \cite{chan30}. The undersampling process form
of the sparse CS-MRI reconstruction can be mathematically modeled as
\begin{equation*}
  \min_{u\in\mathbb{R}^d} \ \{F(u): \  K u=b\},
\end{equation*}
where $F: \mathbb{R}^{d}\rightarrow\mathbb{R}$ is a sparse-promoting function, and $b\in \mathbb{R}^{p}$ represents the undersampled data.
We should mention that $F$ is a good sparse approximation under a certain transform, such as gradient operator \cite{tcse23} or wavelet transform\cite{cai26}.
The earlier works of Lustig et al. \cite{ldfive} and He et al. \cite{ltsixteen} modeled the MRI reconstruction as a linear combination of
wavelet sparsity and total variation (TV) regularization \cite{solr24}.
More precisely, denote $W\in \mathbb{R}^{q\times d}$ be a Haar wavelet transform matrix and
$\Lambda:=\mbox{diag}(\lambda_{1},\lambda_{2},...,\lambda_{q})$ with $\lambda_{i}\geq 0$
be a diagonal matrix, the  CS-MRI model using TV and wavelet is modeled as follows
\begin{equation}\label{mripro}
  \min_{u\in\mathbb{R}^d} \ \{\mu\|u\|_{\text{TV}}+\|\Lambda Wu\|_{1} \ : \ K u=b\},
\end{equation}
where $\|\cdot\|_{\text{TV}}$ is a discretization of
TV regularization, and $\mu>0$ is a regularization parameter.

It is observed that both terms involved in the objective function of (\ref{mripro}) are not differentiable, which
causes main challenges for solving. Over the past decades, great efforts have been made to tackle this difficulty from different aspects.
For examples, the nonlinear inverse scale space method of He et al. \cite{ltsixteen} has the capability to solve (\ref{mripro}) approximately
and has been demonstrated to be more straightforward and more efficient.
The operator splitting method of Ma et al. \cite{MAIEEE}
targets to an inclusion problem resulting from the first-order optimality condition of a related problem of (\ref{mripro}).
It is worthy of noting that both algorithms specifically solve a penalized variant of problem (\ref{mripro}) rather than
itself. The recent work of Li et al. \cite{lxzone} solves problem (\ref{mripro}) directly based on the fact
that the dual formulation of (\ref{mripro}) is essentially a convex composite optimization problem with separable structures. Consequently, Li et al. \cite{lxzone}
developed a two-step fixed-point proximity algorithm (2SFPPA) and demonstrated its numerical efficiency through a series of experiments.
Besides, there are other reconstruction approaches that can yield high quality images without
significantly increasing the computational cost, e.g., the methods in \cite{jia1,jia2} not only keep regularity of smooth part of images,
but also preserve the edges in images; the method in \cite{x50} makes use of similarity in images to establish a
general patch-based nonlocal operator to provide sparse representation of similar
patches; the method in \cite{jia4} uses the dependencies of
the wavelet domain coefficients to accelerate
MRI data acquisition.

Unlike almost all the methods mentioned above, in this paper, we focus on the dual problem of (\ref{mripro}) which has
favourable structures of three separable variables blocks and two separable non-smooth  blocks.
Nevertheless, we note that the directly
extended alternating direction method of multipliers (ADMM) must be avoided because the type of this method might diverge if the
non-smooth blocks exceed two \cite{cai35}. To resolve this dilemma, we employ the simple but very powerful symmetric Gauss-Seidel (sGS) technique
\cite{li39,lskthree} which updates one of the variables merely twice, but it can guarantee convergence theoretically.
The advantage of using the sGS technique
is that it decomposes a large problem into several smaller pieces and then solves them correspondingly via their favorable structures, see e.g., \cite{CHENL,DINGXIAO,LIXIAO,WANGXIAO,xiao40}. As a product, we
also apply the generalized ADMM of Xiao et al. \cite{xiao40} for further performance evaluations.
We do numerical experiments on MRI reconstruction under different sampling patterns and ratios to demonstrate that the proposed algorithms significantly
achieve medium reconstruction accuracies with fast computational speed.
We have to stress, though, that the classic ADMM with unit step-length
is actually the Douglas-Rachford splitting method to the sum of two maximal monotone operators resulting from
the dual formulation, our another motivation of targeting to the dual problem is that the linear operator $KK^\top:\mathbb{R}^p\rightarrow\mathbb{R}^p$ is involved,
instead of $K^\top K:\mathbb{R}^d\rightarrow\mathbb{R}^d$ when ADMMs are used to the primal problem (\ref{mripro}) directly.

The remaining parts of this paper are organized as follows. In section \ref{preli}, we summarize some
basic definitions or concepts in convex analysis and imaging science, and review the recent
developments of ADMMs for subsequent algorithm's construction.  In section \ref{prepsec}, we do some preparations for solving the model (\ref{mripro}).
In section \ref{algsec}, we state
our motivation, and present a couple of sGS technique based ADMMs subsequently.  In section \ref{numsec},
we test both presented algorithms and do performance comparisons
by several numerical experiments.  Finally, we conclude our paper with some remarks in section \ref{finsec}.

\section{Preliminary results}\label{preli}
In this section, we list some basic concepts in convex analysis, review a couple of semi-proximal ADMMs for separable convex
optimization.
We quickly recall some preliminary results in convex analysis of Rockafellar \cite{rock41}. Let $\mathcal{H}$ be a finite-dimensional real Euclidean space endowed
with an inner product $\langle \cdot, \cdot \rangle$ and its induced norm $\|\cdot\|$ respectively.
A subset $\mathcal{C} \subseteq \mathcal{H}$ is said to be convex if $(1-\lambda)x+\lambda y\in \mathcal{C}$
whenever $x,y\in \mathcal{C}$, and $\lambda\in (0,1)$.
The relative interior of $\mathcal{C}$ is denoted by ri$(\mathcal{C})$. If $\mathcal{C}$ is a closed convex set in $\mathcal{H}$, for any $z\in \mathcal{H}$,
let $\Pi_{\mathcal{C}}(z)$ denote the metric projection of $z$
onto $\mathcal{C}$, which is the optimal solution of the minimization problem $\min_{y}\{\| y-z \|^2 \mid y\in \mathcal{C}\}$. Given $x\in\X$, the orthogonal projection onto the $\ell_{\infty}$-norm ball $\Pi_{\mathcal{B}^{(r)}_{\infty}}(x)$ with radius $r>0$ is expressed as
\begin{align}\label{orp}
\Pi_{\mathcal{B}^{(r)}_{\infty}}(x)=\min\{r,\max\{x,-r\}\}.
\end{align}
Similarly, for $\ell_2$-norm, the projection can be given explicitly by
$$
\Pi_{\mathcal{B}^{(r)}_{2}}(x)=\frac{x}{\|x\|_2}\min\{r,\|x\|_2\}.
$$
Moreover,
a subset $\mathcal{K}\subseteq \mathcal{H}$
is called a cone if it is closed under positive scalar multiplication, i.e., $\lambda x\in \mathcal{K}$ with $\lambda>0$ if $x\in\mathcal{K}$.
A cone $\mathcal{K}$ is called a convex cone if it is convex.

Let $f:\mathcal{H}\rightarrow (-\infty, +\infty]$ be a closed proper convex function, we denote by dom$(f)$ the effective domain of $f$, namely,
$\text{dom}(f)=\{x| f(x)< +\infty\}$.
The subdifferential of $f$ at point $x\in \mathcal{H}$ is the set defined by
$$
 \partial f(x):=\{ y\in \mathcal{H} \ | \  f(z)\geq f(x)+\langle y,z-x\rangle, \ \forall \  z\in \mathcal{H}\}{\color{blue}.}
$$
Obviously, $\partial f(x)$ is a closed convex set while it
is not empty. The multi-valued operator $\partial f:x \rightrightarrows \partial f(x)$
is shown to be maximal monotone.
Let $f^{*}$ denote the convex conjugate of $f$, i.e.,
\begin{align*}
 f^{*}(y): =\sup_x\{\langle y, x \rangle-f(x) \ | \ x\in \mathcal{H} \}=-\inf_x\{f(x)-\langle y, x \rangle \ | \ x\in \mathcal{H}\}.
\end{align*}
For a nonempty closed convex set $\mathcal{C}$, the symbol $\delta_{\mathcal{C}}(x)$ represents the indicator function over $\mathcal{C}$ such that $\delta_{\mathcal{C}}(x)=0$ if $x\in\mathcal{C}$ and $+\infty$ otherwise.
The conjugate of an indicator function $\delta_{\mathcal{C}}(x)$ is named support function  defined by
$\delta^{*}_{\mathcal{C}}(x)=\sup \{ \langle x, y\rangle | y\in \mathcal{C}\}$, and the subdifferential of
$\delta_{\mathcal{C}}(x)$ at $x$ is the normal cone of $\mathcal{C}$, i.e, $\partial\delta_{\mathcal{C}}(x) =\mathcal{N}_{\mathcal{C}}(x)$.
It is not hard to deduce that the Fenchel conjugate of $\|x\|_{p}$ is $\|x\|_{p}^*=\delta_{\mathcal{B}^{(1)}_{q}}(x)$ where $\mathcal{B}^{(1)}_{q}:=\{x | \|x\|_q\leq 1\}$ and $1/p+1/q=1$.

The Moreau-Yosida regularization \cite{rock43} of a closed proper convex function $f$ at $x\in \mathcal{H}$ is defined as
\begin{equation}\label{MY}
  \psi_{f}(x):=\min_{y\in \mathcal{H}} \{f(y)+\frac{1}{2}\| y-x\|^{2}\}.
\end{equation}
For any $x\in \mathcal{H}$, problem (\ref{MY}) has an unique optimal solution, which is called the proximal point of $x$ associated
with $f$, i.e.,
\begin{equation*}
  \mbox{prox}_{f}(x):=\mbox{arg} \min_{y\in \mathcal{H}} \{f(y)+\frac{1}{2}\| y-x\|^{2}\}.
\end{equation*}
In particular, the proximal point of $x$ associated with an indicator function $\delta_{\mathcal{C}}(x)$ reduces to the metric projection of $x$ onto $\mathcal{C}$, i.e.,
\begin{equation}\label{proj}
\text{prox}_{\delta_{\mathcal{C}}}(x)=\text{arg}\min_{y\in \mathcal{H}} \{\delta_{\mathcal{C}}(y)+\frac{1}{2}\| y-x \|^{2} \} = \text{arg}\min_{y\in \mathcal{C}} \{\frac{1}{2}\| y-x \|^{2}\}=\Pi_{\mathcal{C}}(x).
\end{equation}

We now turn to briefly review the definition of the discrete form of TV regularization \cite{tcse23,solr24}.
To simplify, we consider a $2$-dimensional grayscale image $U$ with size $d_{1}\times d_{2}$. The isotropic
TV is defined by
\begin{equation}\label{tvdef}
\|U\|_{\text{TV}}=\sum_{i=1}^{d_1}\sum_{j=1}^{d_2}\|\nabla U\|,
\end{equation}
where $\nabla$ denotes the forward finite difference operator on the vertical and horizonal directions, i.e.,
$$
  (\nabla U)_{i,j}=((\nabla U)^{1}_{i,j},(\nabla U)^{2}_{i,j})
$$
with
$$
  (\nabla U)^{1}_{i,j}=\left\{
\begin{array}{ll}
  U_{i+1,j}-U_{i,j},\ & \mbox{if} \ i<d_{1}, \\
  U_{i,1}-U_{i,j},\ & \mbox{if}\  i=d_{1},
\end{array}
\right.
 \quad \text{and} \quad
  (\nabla U)^{2}_{i,j}=\left\{
\begin{array}{ll}
  U_{i,j+1}-U_{i,j},\ & \mbox{if} \ j<d_{2}, \\
  U_{1,j}-U_{i,j},\ & \mbox{if}\ j =d_{2},
\end{array}
\right.
$$
for $i=1,...,d_{1}$ and $j=1,...,d_{2}$. We note that the $\ell_2$-norm in (\ref{tvdef}) can be replaced by the
$\ell_1$-norm, in which case the resulting TV is an anisotropic discretization.
We point out that,  the isotropic TV is often preferred over any anisotropic ones, both
types of discretizations lead to the so-called staircasing effects.

\section{Dual formulation and optimality condition}\label{prepsec}

In this section, we do some necessary preparations for subsequent algorithms' development.
At the first place, we reformulate the TV regularization $\|\cdot\|_{\text{TV}}$ as
a function composed with a linear mapping. We note that all the notations used here are the same as
those in \cite{lxzone} for convenience. The matrix Kronecker product is denoted as $\otimes$, and a matrix $B$ with
size $2d\times d$ is defined as
$$
  B=\left[
  \begin{array}{c}
  I_{d_{2}}\otimes D_{d_{1}}\\
  D_{d_{2}}\otimes I_{d_{1}}\\
  \end{array}
  \right],
$$
where $I$ is an identity matrix with appropriate size, and $D_{r}$ is a $r\times r$ difference matrix with the following form
\begin{equation*}
  D_{r}=\left[
   \begin{array}{cccc}
   1  &  &  & -1\\
   -1 &1 &  &   \\
      &\ddots&\ddots&  \\
     & & -1&1\\
   \end{array}
   \right].
\end{equation*}
Moreover, for any $y\in\mathbb{R}^{2d}$, we define a function $\psi:\mathbb{R}^{2d}\rightarrow \mathbb{R}$ as
$$
  \psi(y)=\sum^{d}_{i=1}\| [y_{i},y_{d+i}]^{\top}\|.
$$
With these definitions, then the isotropic TV in (\ref{mripro}) can be expressed as
\begin{equation}\label{tvtospi}
\|u\|_{\text{TV}} =\psi(Bu).
\end{equation}
Moreover, for any $y\in\mathbb{R}^q$, we define a convex function $\varphi:\mathbb{R}^{q}\rightarrow\mathbb{R}$ as
$
\| \Lambda y\|_{1}=\varphi(y),
$
which yields for $W\in\mathbb{R}^{q\times d}$ such that $Wu\in\mathbb{R}^q$ and
\begin{equation}\label{tvtospi2}
\varphi(Wu)=\|\Lambda Wu\|_1.
\end{equation}
Substituting (\ref{tvtospi}) and (\ref{tvtospi2}) into (\ref{mripro}) and recalling the definition of indicator function, it
shows that the problem (\ref{mripro}) can be reformulated as
\begin{equation}\label{mriref}
  \min_{u\in\mathbb{R}^d} \ \big\{\mu\psi(Bu)+\varphi(Wu)+\delta_{\{b\}}(K u)\big \},
\end{equation}
where $\delta_{\{b\}}(K u)$ implies $\delta_{ \{b\} }(K u)=0$ if $K u =b$ and $+\infty$ otherwise. Furthermore,
denote $Bu=z_{1}\in\mathbb{R}^{2d}$, $Wu=z_{2}\in\mathbb{R}^q$ and $Ku=z_{3}\in\mathbb{R}^p$, then problem (\ref{mriref}) can
be rewritten equivalently as
\begin{equation}\label{mrireftwo}
\begin{array}{rl}
 \min\limits_{u,z_1,z_2,z_3} & \mu\psi(z_{1})+\varphi(z_{2})+\delta_{\{b\}}(z_{3})\\[2mm]
 \mbox{s.t.}
&Bu=z_{1}, \ Wu=z_{2}, \ K u=z_{3}.
\end{array}
\end{equation}

The Lagrangian function associated with problem (\ref{mrireftwo}) is given by
\begin{align*}
\mathcal{L}(u,z_{1},z_{2},z_{3};x_{1},x_{2},x_{3})&= \mu\psi(z_{1})+\varphi(z_{2})+\delta_{\{b\}}(z_{3})\\
&+\langle Bu-z_{1}, x_{1}\rangle+\langle Wu-z_{2}, x_{2}\rangle+\langle Ku-z_{3}, x_{3}\rangle,
\end{align*}
where $x_{1}\in\mathbb{R}^{2d}$, $x_{2}\in\mathbb{R}^q$ and $x_{3}\in\mathbb{R}^p$ are multipliers associated with constrains. The
Lagrangian dual function of  (\ref{mrireftwo}) is to minimize $\mathcal{L}(u,z_{1},z_{2},z_{3};x_{1},x_{2},x_{3})$ over
$(u,z_1,z_2,z_3)$, that is
\begin{align*}
   D(x_1,x_2,x_3)
  =& \inf_{u,z_{1},z_{2},z_{3}} \mathcal{L}(u,z_{1},z_{2},z_{3};x_{1},x_{2},x_{3})\\[1mm]
  =&-(\mu\psi)^{*}(x_{1})-\varphi^{*}(x_{2})-\delta^{*}_{\{b\}}(x_{3}),
\end{align*}
by noting the definition of the conjugate function and the fact that $B^{\top}x_{1}+W^{\top}x_{2}+K^{\top}x_{3}=0$. The
Lagrangian dual problem of the original (\ref{mrireftwo}) is to maximize this dual function $D(x_1,x_2,x_3)$,
which can equivalently be written as the following minimization problem with three separate blocks of variables and a single linear equality constraint:
\begin{equation}\label{dualmriref}
\begin{array}{rl}
  \min\limits_{x_1,x_2,x_3} \ & \ (\mu\psi)^{*}(x_{1})+\varphi^{*}(x_{2})+\delta^{*}_{\{b\}}(x_{3})\\[2mm]
\text{s.t.} & B^{\top}x_{1}+W^{\top}x_{2}+K^{\top}x_{3}=0\\[2mm]
 & x_{1}\in\mathbb{R}^{2d}, \ x_{2}\in\mathbb{R}^{q}, \ x_{3}\in\mathbb{R}^{p}.
\end{array}
\end{equation}
From the properties of conjugate for norm functions reviewed previously, we know that
$
(\mu\psi)^{*}(x_{1})=\delta_{\mathcal{B}_2^{(\mu)}}(x_1),
$
where $\mathcal{B}_2^{(\mu)}=\{y\in\mathbb{R}^{2d} | \|[y_i,y_{d+i}]^\top\|\leq \mu, \ i=1,\ldots,d  \}$ and
$
\varphi^{*}(x_{2})=\delta_{\mathcal{B}_{\infty}^{(\lambda)}}(x_2),
$
where $\mathcal{B}_{\infty}^{(\lambda)}=\{y\in\mathbb{R}^q | |y_i|\leq\lambda_i, \ i=1,\ldots,q \}$. Then problem (\ref{dualmriref})
transforms into the following form
\begin{equation}\label{dual2}
\begin{array}{rl}
  \min\limits_{x_1,x_2,x_3} \ & \ \delta_{\mathcal{B}_2^{(\mu)}}(x_1)+\delta_{\mathcal{B}_{\infty}^{(\lambda)}}(x_2)+\langle b,x_3\rangle\\[2mm]
\text{s.t.} & B^{\top}x_{1}+W^{\top}x_{2}+K^{\top}x_{3}=0\\[2mm]
 & x_{1}\in\mathbb{R}^{2d}, \ x_{2}\in\mathbb{R}^{q}, \ x_{3}\in\mathbb{R}^{p}.
\end{array}
\end{equation}
The Lagrangian function associated to the dual problem (\ref{dual2}) takes the following form
$$\mathcal{L}(x_1,x_2,x_3;u)=\delta_{\mathcal{B}_2^{(\mu)}}(x_1)+\delta_{\mathcal{B}_{\infty}^{(\lambda)}}(x_2)+\langle b,x_3\rangle-\langle u,B^{\top}x_{1}+W^{\top}x_{2}+K^{\top}x_{3}\rangle,$$
where $u\in\mathbb{R}^{d}$ is a multiplier.
Suppose that $(\bar{x}_1,\bar{x}_2,\bar{x}_3)$ is the optimal solution of problem (\ref{dual2}). Then there exists a Lagrangian multiplier $\bar{u}$ such that the following KKT system holds,
\begin{equation}\label{KKTnew}
\left\{
\begin{array}{llll}
0 \in \mathcal{N}_{\mathcal{B}_2^{(\mu)}}(\bar{x}_{1})-B \bar{u},\\
0 \in \mathcal{N}_{\mathcal{B}_{\infty}^{(\lambda)}}(\bar{x}_{2})-W \bar{u},\\
0=b-K \bar{u},\\
0=B^{\top}\bar{x}_{1}+W^{\top}\bar{x}_{2}+K^{\top}\bar{x}_{3},
\end{array}
\right.
\end{equation}
where $\mathcal{N}_{\mathcal{B}_2^{(\mu)}}(\bar{x}_{1})$ (resp. $\mathcal{N}_{\mathcal{B}_{\infty}^{(\lambda)}}(\bar{x}_{2})$) is the normal cone to $\mathcal{B}_2^{(\mu)}$ (resp. $\mathcal{B}_{\infty}^{(\lambda)}$) at $\bar x_{1} \in \mathcal{B}_2^{(\mu)}$ (resp.
$\bar x_{2} \in \mathcal{B}_{\infty}^{(\lambda)} $).

The model (\ref{dual2})  has separable structure in terms of both the objective function and the constraint, and thus, it falls into the
framework of ADMM. The directly extended ADMM has been implemented and illustrated its practical performance by Li et al. \cite{lxzone}. Nevertheless,
the convergence of the directly extended ADMM can not be theoretically guaranteed.
To address this issue, Li et al. \cite{lxzone} characterized the solutions of (\ref{dual2})
in terms of fixed-point of a proximity related operator and developed
an algorithm named 2SFPPA which is demonstrated to be very efficient  for a CS-MRI reconstruction problem.

\section{Solving problem (\ref{dual2}) by sGS technique based ADMM and generalized ADMM}\label{algsec}
We observe that the dual model (\ref{dual2}) contains three blocks of variables and two blocks of non-smooth convex functions, and hence
it can be solved by the methods of sGS technique based ADMM (abbr. sGS-ADMM) and generalized ADMM (abbr. sGS-ADMM\_G) with convergence guaranteed.

\subsection{Solving problem (\ref{dual2}) by sGS-ADMM}
In this section, we target to employ the sGS-ADMM to solve (\ref{dual2}) and establish its convergence.
The augmented Lagrangian function associated with the problem (\ref{dual2}) is defined by
\begin{align*}
  \mathcal{L}_{\sigma}(x_{1}, x_{2}, x_{3}; u)&=\delta_{\mathcal{B}_{2}^{(\mu)}}(x_{1})+\delta_{\mathcal{B}_{\infty}^{(\lambda)}}(x_{2})+\langle b, x_{3}\rangle\\
  &-\langle u, B^{\top}x_{1}+W^{\top}x_{2}+K^{\top} x_{3}\rangle+\frac{\sigma}{2}\|B^{\top}x_{1}+W^{\top}x_{2}+
  K^{\top}x_{3}\|^{2},
\end{align*}
where $\sigma>0$ is a penalty parameter and $u\in\mathbb{R}^{d}$ is a multiplier. It is well-known that,
starting from $(x_1^0,x_2^0,x_3^0)$, the classic augmented
Lagrangian method of multipliers solves
\begin{equation}\label{almalg}
(x_1^{k+1},x_2^{k+1},x_3^{k+1})=\text{arg}\min_{x_1,x_2,x_3}\mathcal{L}_{\sigma}(x_1,x_2,x_3;u^k)
\end{equation}
at the current iteration and updates the multiplier $u^{k+1}$
subsequently. Solving (\ref{almalg}) for $x_1$, $x_2$ and $x_3$ simultaneously
can be difficult, since it ignores the favorable separable structure
emerging in the objective function and the constraints. Alternatively, one may try to replace (\ref{almalg}) by directly
extended ADMM with the Gauss-Seidel order that $x_1\rightarrow x_2\rightarrow x_3\rightarrow u$. However,
the type of the approach may diverge in theory, although it often performs much
better numerically. To deal with this challenge,  we apply the intelligent sGS technique \cite{lskthree},
which groups $x_{1}$ as one block and $(x_{2}, x_{3})$ as another, and takes the cycle order
$x_1\rightarrow x_3\rightarrow x_2\rightarrow x_3\rightarrow u$ instead of the usual $x_1\rightarrow x_2\rightarrow x_3\rightarrow u$.
More precisely, with the given $(x_{1}^{k}, x_{2}^{k}, x_{3}^{k}; u^{k})$,  the new iteration
$(x_{1}^{k+1}, x_{2}^{k+1}, x_{3}^{k+1}; u^{k+1})$ is generated via the iterative scheme:
\begin{equation}\label{mrispadmm}
\left\{
  \begin{array}{l}
   x_{1}^{k+1}=\arg\min_{x_1\in\mathbb{R}^{2d}}\big\{\mathcal{L}_{\sigma}(x_{1}, x_{2}^{k}, x_{3}^{k}; u^{k})+\frac{\sigma}{2}\| x_1-x_{1}^{k} \|^{2}_{\mathcal{S}_1}\big\},\\[3mm]
   x_{3}^{k+1/2}=\arg\min_{x_3\in\mathbb{R}^p}\big\{\mathcal{L}_{\sigma}(x_{1}^{k+1}, x_{2}^{k}, x_{3}; u^{k})+\frac{\sigma}{2}\| x_3-x_{3}^{k} \|^{2}_{\mathcal{S}_3}\big\},\\[3mm]
   x_{2}^{k+1}=\arg\min_{x_2\in\mathbb{R}^q}\big\{\mathcal{L}_{\sigma}(x_{1}^{k+1}, x_{2}, x_{3}^{k+1/2}; u^{k})+\frac{\sigma}{2}\| x_2-x_{2}^{k} \|^{2}_{\mathcal{S}_2}\big\},\\[3mm]
   x_{3}^{k+1}=\arg\min_{x_3\in\mathbb{R}^p}\big\{\mathcal{L}_{\sigma}(x_{1}^{k+1}, x_{2}^{k+1}, x_{3}; u^{k})+\frac{\sigma}{2}\| x_3-x_{3}^{k} \|^{2}_{\mathcal{S}_3}\big\},\\[3mm]
   u^{k+1}=u^{k}-\tau\sigma(B^{\top}x^{k+1}_{1}+W^{\top}x^{k+1}_{2}+K^{\top}x^{k+1}_{3}),
  \end{array}
\right.
\end{equation}
where $\mathcal{S}_1=(\tau_1 I_{2d}-BB^{\top})$, $\mathcal{S}_2= (\tau_2 I_{q}-WW^{\top})$ and
$\mathcal{S}_3= (\tau_3 I_{p}-KK^{\top})$ are positive semi-definite linear operators with some appropriate choices
of $\tau_1$, $\tau_2$ and $\tau_3$. As can be seen from the framework that an extra preparation step to compute $x_{3}^{k+1/2}$ is performed
before computing $x_{2}^{k+1}$. We will show in the next subsection that this extra step can be done at moderate cost, so
that the iterative process can be performed cheaply.

We now show that the iterative manner $x_3^{k+1/2}\rightarrow x_2^{k+1}\rightarrow x_3^{k+1}$ can be grouped together as one block
$(x_2^{k+1},x_3^{k+1})$ with a specially designed semi-proximal term. The fact is directly followed from the well-known
sGS decomposition theorem of Li et al. \cite{li39} which can be stated as follows.

\begin{lemma}\label{lemma1}
For $k\geq0$, the $x_{2}$- and $x_{3}$- subproblems in (\ref{mrispadmm}) can be summarized as the following compact form:
\begin{equation}\label{x23}
  (x^{k+1}_{2},x^{k+1}_{3}) = \arg\min_{x_{2},x_{3}}\bigg\{\mathcal{L}_{\sigma}(x^{k+1}_{1}, x_{2}, x_{3}; u^{k})
  +\frac{\sigma}{2}\Big\|
  \Big(
   \begin{array}{c}
   x_{2}\\
  x_{3}\\
   \end{array}
   \Big)-
   \Big(
   \begin{array}{c}
   x^{k}_{2}\\
  x^{k}_{3}\\
   \end{array}
   \Big)
   \Big\|^{2}_{\mathcal{G}}\bigg\},
\end{equation}
where $\mathcal{G}$ is self-adjoint positive semi-definite linear operator.
\end{lemma}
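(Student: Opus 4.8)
The plan is to recognize the three consecutive subproblems that produce $x_{3}^{k+1/2}$, $x_{2}^{k+1}$ and $x_{3}^{k+1}$ in (\ref{mrispadmm}) as exactly one symmetric (backward--forward) Gauss--Seidel sweep on the block pair $(x_{2},x_{3})$, and then to invoke the sGS decomposition theorem of \cite{li39,lskthree} to fold that sweep into the single joint minimization (\ref{x23}). First I would fix $k$, freeze $x_{1}=x_{1}^{k+1}$ and $u=u^{k}$, and absorb the two semi-proximal terms into the objective by setting
\[
g(x_{2},x_{3}):=\mathcal{L}_{\sigma}(x_{1}^{k+1},x_{2},x_{3};u^{k})+\tfrac{\sigma}{2}\|x_{2}-x_{2}^{k}\|_{\mathcal{S}_{2}}^{2}+\tfrac{\sigma}{2}\|x_{3}-x_{3}^{k}\|_{\mathcal{S}_{3}}^{2}.
\]
Then $g(x_{2},x_{3})=\delta_{\mathcal{B}_{\infty}^{(\lambda)}}(x_{2})+q(x_{2},x_{3})$, where $q$ is a convex quadratic (with some linear and constant terms) whose Hessian in $(x_{2},x_{3})$ is the self-adjoint operator
\[
\widetilde{\mathcal{Q}}=\sigma\begin{pmatrix} WW^{\top}+\mathcal{S}_{2} & WK^{\top}\\ KW^{\top} & KK^{\top}+\mathcal{S}_{3}\end{pmatrix}=\sigma\begin{pmatrix} \tau_{2}I_{q} & WK^{\top}\\ KW^{\top} & \tau_{3}I_{p}\end{pmatrix}\succeq 0,
\]
and, for the choices of $\tau_{2},\tau_{3}$ that make $\mathcal{S}_{2},\mathcal{S}_{3}\succeq 0$, the two diagonal blocks $\sigma\tau_{2}I_{q}$ and $\sigma\tau_{3}I_{p}$ are positive definite. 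A short check shows that each of the three subproblems in (\ref{mrispadmm}) is exactly the minimization of $g$ over one of the two blocks with the other frozen, performed in the order $x_{3}\rightarrow x_{2}\rightarrow x_{3}$ (the proximal terms not involving the active block simply drop out); since $x_{2}$ carries the only nonsmooth term and is visited once in the middle while $x_{3}$ is the ``doubled'' last block, this is precisely one sGS cycle for $g$ started from $(x_{2}^{k},x_{3}^{k})$.

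Next I would apply the sGS decomposition theorem. Writing $\widetilde{\mathcal{Q}}=\mathcal{D}+\mathcal{U}+\mathcal{U}^{\top}$ with block-diagonal part $\mathcal{D}=\sigma\,\mathrm{Diag}(\tau_{2}I_{q},\tau_{3}I_{p})\succ 0$ and strictly block-upper-triangular part $\mathcal{U}=\sigma\left(\begin{smallmatrix} 0 & WK^{\top}\\ 0 & 0\end{smallmatrix}\right)$, set $\mathrm{sGS}(\widetilde{\mathcal{Q}}):=\mathcal{U}\mathcal{D}^{-1}\mathcal{U}^{\top}$; since $\mathrm{sGS}(\widetilde{\mathcal{Q}})=(\mathcal{U}\mathcal{D}^{-1/2})(\mathcal{U}\mathcal{D}^{-1/2})^{\top}$ it is self-adjoint and positive semi-definite, and explicitly
\[
\mathrm{sGS}(\widetilde{\mathcal{Q}})=\sigma\begin{pmatrix} \tau_{3}^{-1}WK^{\top}KW^{\top} & 0\\ 0 & 0\end{pmatrix}.
\]
The sGS decomposition theorem of \cite{li39} then asserts that, starting from $(x_{2}^{k},x_{3}^{k})$, the sweep $x_{3}\rightarrow x_{2}\rightarrow x_{3}$ applied to $g$ returns
\[
(x_{2}^{k+1},x_{3}^{k+1})=\arg\min_{x_{2},x_{3}}\Big\{g(x_{2},x_{3})+\tfrac{1}{2}\Big\|\Big(\begin{array}{c}x_{2}\\x_{3}\end{array}\Big)-\Big(\begin{array}{c}x_{2}^{k}\\x_{3}^{k}\end{array}\Big)\Big\|^{2}_{\mathrm{sGS}(\widetilde{\mathcal{Q}})}\Big\}.
\]
Substituting the definition of $g$ back in and merging the three proximal terms, all anchored at $(x_{2}^{k},x_{3}^{k})$, turns the right-hand side into exactly (\ref{x23}) with
\[
\mathcal{G}:=\begin{pmatrix}\mathcal{S}_{2} & 0\\ 0 & \mathcal{S}_{3}\end{pmatrix}+\frac{1}{\sigma}\,\mathrm{sGS}(\widetilde{\mathcal{Q}})=\begin{pmatrix}\mathcal{S}_{2}+\tau_{3}^{-1}WK^{\top}KW^{\top} & 0\\ 0 & \mathcal{S}_{3}\end{pmatrix},
\]
which is self-adjoint and positive semi-definite, being the sum of the positive semi-definite operators $\mathrm{Diag}(\mathcal{S}_{2},\mathcal{S}_{3})$ and $\sigma^{-1}\mathrm{sGS}(\widetilde{\mathcal{Q}})$. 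This would complete the proof.

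Since the argument is essentially an invocation of a known decomposition identity, there is no deep obstacle; the points that need care are, first, confirming that the order $x_{1}\rightarrow x_{3}\rightarrow x_{2}\rightarrow x_{3}\rightarrow u$ in (\ref{mrispadmm}) is genuinely the backward--forward sweep required by the theorem once $x_{2}$ is taken as the first (nonsmooth) block and $x_{3}$ as the doubled last block, and second, verifying the theorem's hypothesis that the diagonal blocks of the effective quadratic $\widetilde{\mathcal{Q}}$ are positive definite, which is what makes the three inner subproblems uniquely solvable and the Schur-complement identity underlying the theorem valid. Once the $\sigma$-scaling and the common anchor $(x_{2}^{k},x_{3}^{k})$ of all proximal terms are tracked through the bookkeeping, matching the theorem's output to the form (\ref{x23}), reading off $\mathcal{G}$, and confirming its positive semi-definiteness are all immediate.
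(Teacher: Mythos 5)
Your proposal is correct and follows the same route as the paper: identify the $x_{3}\rightarrow x_{2}\rightarrow x_{3}$ steps as one symmetric Gauss--Seidel cycle on the block pair $(x_{2},x_{3})$ and invoke the sGS decomposition theorem of \cite{li39}. The difference is in the identification of $\mathcal{G}$, and your bookkeeping is the more careful one. The paper builds the sGS operator from the splitting $\mathcal{H}=\mathcal{D}+\mathcal{M}+\mathcal{M}^{*}$ with $\mathcal{D}=\mathrm{Diag}(WW^{\top},KK^{\top})$, i.e.\ the block diagonal of the augmented-Lagrangian Hessian \emph{without} the semi-proximal terms, and so obtains $\mathcal{G}_{1}=\mathrm{Diag}\big(WK^{\top}(KK^{\top})^{-1}KW^{\top},\,0\big)$. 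But the subproblems in (\ref{mrispadmm}) are solved \emph{with} the terms $\frac{\sigma}{2}\|x_{i}-x_{i}^{k}\|^{2}_{\mathcal{S}_{i}}$, so the quadratic actually being swept has block diagonal $\sigma\,\mathrm{Diag}(\tau_{2}I_{q},\tau_{3}I_{p})$, and the decomposition theorem applied to this effective Hessian gives exactly your
\begin{equation*}
\mathcal{G}=\mathrm{Diag}\big(\mathcal{S}_{2}+\tau_{3}^{-1}WK^{\top}KW^{\top},\,\mathcal{S}_{3}\big).
\end{equation*}
A direct check of the two-block optimality conditions (eliminating $x_{3}$ via $x_{3}^{k+1/2}-x_{3}^{k+1}=(\sigma\tau_{3}I)^{-1}(\sigma KW^{\top})(x_{2}^{k+1}-x_{2}^{k})$) confirms that yours is the operator for which (\ref{x23}) reproduces the iterates of (\ref{mrispadmm}) exactly; the paper's version would be right only if $\mathcal{S}_{3}=0$, and moreover its $\mathcal{D}^{-1}$ as written presupposes invertibility of $WW^{\top}$, which fails for the non-decimated wavelet with $q=4d$ (though that block happens to cancel out of $\mathcal{G}_{1}$). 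Since the lemma only asserts the existence of some self-adjoint positive semidefinite $\mathcal{G}$, both arguments support the statement as literally phrased, but your explicit $\mathcal{G}$ is the correct one for the algorithm as implemented, and your two stated care points (verifying the backward--forward order with $x_{2}$ as the nonsmooth first block, and positive definiteness of the effective diagonal blocks $\sigma\tau_{2}I$ and $\sigma\tau_{3}I$) are precisely the hypotheses the paper leaves implicit.
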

\begin{proof}
Let
\begin{equation*}
  \mathcal{H}=\mathcal{D}+\mathcal{M}+\mathcal{M}^{*},
\end{equation*}
where
\begin{equation*}
\mathcal{M}=
  \left(
   \begin{array}{cc}
   0&WK^{\top}\\
   0&0\\
   \end{array}
   \right),
  \qquad
 \mathcal{D}=
  \left(
   \begin{array}{cc}
   WW^{\top}&0\\
   0&KK^{\top}\\
   \end{array}
   \right),
    \qquad
  \mathcal{M}^{*}=
  \left(
   \begin{array}{cc}
   0&0\\
   \mathcal{K} W^{\top}&0\\
   \end{array}
   \right).
\end{equation*}
Furthermore, denote
 \begin{equation*}
   \mathcal{G}_{1}=\mathcal{M}D^{-1}\mathcal{M}^{*}=  \left(
   \begin{array}{cc}
 WK^{\top}(KK^{\top})^{-1}K W^{\top}&0\\
   0&0\\
   \end{array}
   \right),
 \end{equation*}
and
 \begin{equation*}
   \mathcal{G}_{2}=
   \left(
   \begin{array}{cc}
  \mathcal{S}_2&0 \\
   0 &\mathcal{S}_3\\
   \end{array}
   \right)
   =
   \left(
   \begin{array}{cc}
 \tau_{2}I_{q}-WW^{\top}&0\\
   0&\tau_{3}I_{p}-KK^{\top}\\
   \end{array}
   \right),
 \end{equation*}
then the desired conclusion is followed from the sGS decomposition theorem in \cite[Theorem 1]{li39} by setting $\mathcal{G}=\mathcal{G}_1+\mathcal{G}_2$.
\end{proof}

Based on the result, we can rewrite (\ref{mrispadmm}) equivalently as follows
\begin{equation}\label{mriadmm2}
\left\{
  \begin{array}{l}
   x_{1}^{k+1}=\arg\min_{x_1\in\mathbb{R}^{2d}}\big\{\mathcal{L}_{\sigma}(x_{1}, x_{2}^{k}, x_{3}^{k}; u^{k})+\frac{\sigma}{2}\| x-x_{1}^{k} \|^{2}_{\mathcal{S}_1}\big\},\\[2mm]
(x^{k+1}_{2},x^{k+1}_{3}) = \arg\min_{x_{2},x_{3}}\bigg\{\mathcal{L}_{\sigma}(x^{k+1}_{1}, x_{2}, x_{3}; u^{k})
  +\frac{\sigma}{2}\Big\|
  \Big(
   \begin{array}{c}
   x_{2}\\
  x_{3}\\
   \end{array}
   \Big)-
   \Big(
   \begin{array}{c}
   x^{k}_{2}\\
  x^{k}_{3}\\
   \end{array}
   \Big)
   \Big\|^{2}_{\mathcal{G}}\bigg\},\\[2mm]
   u^{k+1}=u^{k}-\tau\sigma(B^{\top}x^{k+1}_{1}+W^{\top}x^{k+1}_{2}+K^{\top}x^{k+1}_{3}),
  \end{array}
\right.
\end{equation}
which reduces to the two-block semi-proximal ADMM.
This equivalence is very important because the convergence can be easily followed by using
the known convergence result \cite{mtfour}.
To conclude this subsection, we present the convergence result of sGS-ADMM for solving (\ref{dual2}).

\begin{theorem}(\cite[Theorem B.1]{mtfour})\label{theo1}
Suppose that the sequence ${(x^{k}_{1},x^{k}_{2},x^{k}_{3};u^{k})}$ is generated by the iterative scheme (\ref{mrispadmm}) from
an initial point $(x_1^0,x_2^0,x_3^0;u^0)$.
If $\tau\in(0, (1+\sqrt{5})/2)$ and $\mathcal{S}_1$, $\mathcal{S}_2$, and $\mathcal{S}_3$ are positive semi-definite linear operators,
the sequence $\{(x^{k}_{1},x^{k}_{2},x^{k}_{3})\}$ converges to an optimal
solution of the dual problem (\ref{dual2}) and $\{u^{k}\}$ converges to an optimal solution of the primal problem (\ref{mripro}).
\end{theorem}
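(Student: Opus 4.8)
The plan is to recognize the statement as a direct application of the two-block semi-proximal ADMM convergence theorem \cite[Theorem B.1]{mtfour}, so the real task is to cast the recursion (\ref{mrispadmm}) into exactly that framework and then check the theorem's hypotheses. The casting step is already supplied by Lemma \ref{lemma1}: the three inner updates $x_3^{k+1/2}\to x_2^{k+1}\to x_3^{k+1}$ in (\ref{mrispadmm}) collapse into the single $(x_2,x_3)$-subproblem in (\ref{mriadmm2}) carrying the self-adjoint positive semidefinite operator $\mathcal{G}=\mathcal{G}_1+\mathcal{G}_2$. Hence (\ref{mrispadmm}) coincides with the two-block semi-proximal ADMM applied to (\ref{dual2}) with blocks $x_1$ and $(x_2,x_3)$, semi-proximal operators $\sigma\mathcal{S}_1$ and $\sigma\mathcal{G}$, penalty parameter $\sigma$, and dual step-length $\tau$.

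It then remains to verify the assumptions of \cite[Theorem B.1]{mtfour}. First, the three summands $(\mu\psi)^{*}=\delta_{\mathcal{B}_2^{(\mu)}}$, $\varphi^{*}=\delta_{\mathcal{B}_\infty^{(\lambda)}}$ and $\langle b,\cdot\rangle$ are closed proper convex, the first two because the corresponding norm balls are nonempty. Second, the KKT system (\ref{KKTnew}) is solvable: problem (\ref{mripro}), equivalently (\ref{mrireftwo}), is a convex program whose only constraints are affine and whose feasible set is nonempty whenever $b$ lies in the range of $K$ (the standing assumption for the CS-MRI model), so there is no duality gap and a primal-dual optimal pair exists; equivalently, (\ref{dual2}) has an optimal solution together with an associated multiplier $\bar u$. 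Third, both semi-proximal operators are positive semidefinite: $\mathcal{S}_1\succeq 0$ by hypothesis, while $\mathcal{G}_1=WK^{\top}(KK^{\top})^{-1}KW^{\top}$ (padded by a zero block) is of Gram type, hence $\succeq 0$, with $KK^{\top}$ invertible because $K=\mathcal{P}\mathcal{F}$ with $\mathcal{F}$ unitary, and $\mathcal{G}_2=\mathrm{diag}(\mathcal{S}_2,\mathcal{S}_3)\succeq 0$ by hypothesis, so $\mathcal{G}=\mathcal{G}_1+\mathcal{G}_2\succeq 0$.

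Fourth, every subproblem in (\ref{mrispadmm}) is well posed: the choice $\mathcal{S}_i=\tau_iI-(\cdot)(\cdot)^{\top}$ cancels the quadratic coupling coming from the augmented term, so, after completing the square, the $x_1$-, $x_2$- and $x_3$-updates reduce to the metric projections $\Pi_{\mathcal{B}_2^{(\mu)}}$, $\Pi_{\mathcal{B}_\infty^{(\lambda)}}$ and an affine shift, which are single-valued by (\ref{orp}) and the $\ell_2$-ball formula; in operator form the $x_1$-block Hessian is $\sigma\tau_1I$ and the $(x_2,x_3)$-block Hessian is $\sigma(\mathcal{D}+\mathcal{M})\mathcal{D}^{-1}(\mathcal{D}+\mathcal{M})^{*}+\sigma\mathcal{G}_2$, both positive definite once $\tau_1,\tau_2,\tau_3$ are taken large enough. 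With $\tau\in(0,(1+\sqrt5)/2)$ by hypothesis, \cite[Theorem B.1]{mtfour} then yields that $\{(x_1^k,x_2^k,x_3^k)\}$ converges to some $(\bar x_1,\bar x_2,\bar x_3)$ solving (\ref{dual2}) and $\{u^k\}$ converges to some $\bar u$, with $(\bar x_1,\bar x_2,\bar x_3;\bar u)$ satisfying (\ref{KKTnew}). Finally, $\bar u$ is identified as a solution of (\ref{mripro}): since $u$ is the Lagrange multiplier of the affine constraint of (\ref{dual2}) and (\ref{dual2}) is the Lagrangian dual of (\ref{mrireftwo}), the usual primal-dual correspondence makes $\bar u$ optimal for (\ref{mrireftwo}), hence for (\ref{mripro}) through $z_1=Bu$, $z_2=Wu$, $z_3=Ku$; alternatively this is read off directly from the last two relations of (\ref{KKTnew}) combined with the first two, which are precisely primal feasibility and the optimality condition of (\ref{mripro}).

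I expect the only genuinely delicate point to be the fourth item together with the exact matching of our two-block reformulation to the hypotheses of \cite[Theorem B.1]{mtfour}: one must make sure that the semi-proximal terms, although merely positive semidefinite, still render every subproblem uniquely solvable and still give convergence of the whole sequence (not only of a subsequence), and one must pin down the smallest $\tau_1,\tau_2,\tau_3$ for which $\mathcal{S}_1,\mathcal{S}_2,\mathcal{S}_3\succeq 0$. Everything else is bookkeeping resting on Lemma \ref{lemma1} and the conjugacy identities recalled earlier.
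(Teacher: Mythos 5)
Your proposal follows exactly the route the paper takes: use Lemma \ref{lemma1} to collapse the $x_3\to x_2\to x_3$ sweep into a single $(x_2,x_3)$-block with the semi-proximal operator $\mathcal{G}=\mathcal{G}_1+\mathcal{G}_2$, thereby identifying (\ref{mrispadmm}) with the two-block semi-proximal ADMM (\ref{mriadmm2}), and then invoke the cited convergence theorem. The paper leaves the hypothesis-checking implicit, whereas you spell it out (properness of the conjugates, solvability of (\ref{KKTnew}), positive semidefiniteness of $\mathcal{G}$, well-posedness of each subproblem), which is a correct and welcome elaboration but not a different argument.
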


\begin{remark}
One issue yet remained to be addressed is to choose the positive semi-definite linear operators $\mathcal{S}_1$, $\mathcal{S}_2$ and $\mathcal{S}_3$. From
the theory in numerical algebra, we know that this goal can be achieved by $\tau_1\geq\rho( B B^\top)$, $\tau_2\geq\rho(WW^\top)$
and $\tau_3\geq\rho(KK^\top)$, respectively, where $\rho(\cdot)$ denotes the spectral radius of a matrix.
\end{remark}

\subsection{Implement details}

Observing that each step of the iterative scheme (\ref{mrispadmm}) involves solving a convex minimization problem, we
now illustrate that a simple closed-form solution is permitted for each subproblem, which leads to the
framework easy to implement. Firstly, we can get for every $k=0, 1, ...$ that
\begin{align*}
   x_{1}^{k+1}&=\arg\min_{x_1\in\mathbb{R}^{2d}}\Big\{\mathcal{L}_{\sigma}(x_{1}, x_{2}^{k}, x_{3}^{k}; u^{k})+\frac{\sigma}{2}\| x_1-x_{1}^{k} \|^{2}_{\mathcal{S}_1}\Big\}\\[2mm]
              &=\Pi_{\mathcal{B}_2^{(\mu)}}\Big(x_{1}^{k}-\frac{1}{\tau_{1}}B\big(B^{\top}x_{1}^{k}
               +W^{\top}x_{2}^{k}+K^{\top}x_{3}^{k}-\frac{1}{\sigma}u^{k}\big)\Big),
\end{align*}
where the last equality is from (\ref{proj}). Secondly, for every $k=0, 1, ...$, we have
\begin{align*}
   x_{3}^{k+1/2}&=\arg\min_{x_3\in\mathbb{R}^p}\Big\{\mathcal{L}_{\sigma}(x_{1}^{k+1}, x_{2}^{k+1}, x_{3}; u^{k})+\frac{\sigma}{2}\| x_3-x_{3}^{k} \|^{2}_{\mathcal{S}_3}\Big\}\\[3mm]
               &=x_{3}^{k}-\frac{1}{\tau_{3}}K(B^{\top}x_{1}^{k+1}
              +W^{\top}x_{2}^{k}+K^{\top}x_{3}^{k}-\frac{1}{\sigma}u^{k})-\frac{1}{\tau_{3}\sigma}b.
\end{align*}
We note that the linear operator $KK^\top$ makes $x_3$ is located in a $p$-dimensional subspace rather than the $d$-dimensional subspace if
the primal problem (\ref{mripro}) is targeted.
Thirdly, for every $k=0, 1, ...$, the solution $x_{2}^{k+1}$ can be obtained by
\begin{align*}
   x_{2}^{k+1}&=\arg\min_{x_2\in\mathbb{R}^q}\Big\{\mathcal{L}_{\sigma}(x_{1}^{k+1}, x_{2}, x_{3}^{k+1/2}; u^{k})+\frac{\sigma}{2}\| x_2-x_{2}^{k} \|^{2}_{\mathcal{S}_2}\Big\}\\[3mm]
              &=\Pi_{\mathcal{B}_{\infty}^{(\lambda)}}\Big(x_{2}^{k}-\frac{1}{\tau_{2}}W\big(B^{\top}x_{1}^{k+1}
                +W^{\top}x_{2}^{k}+K^{\top}x_{3}^{k+1/2}-\frac{1}{\sigma}u^{k}\big)\Big).
\end{align*}
The derivation processes indicate that each subproblem enjoys analytic solution provided by properly choosing the semi-proximal terms with
respect to $\mathcal{S}_1$, $\mathcal{S}_2$ and $\mathcal{S}_3$.

In summary, we are ready to state the full steps of the sGS-ADMM
while it is used to solve the dual model problem (\ref{dual2})  as follows:

\begin{algorithm}
\noindent
{\bf Algorithm: sGS-ADMM}
\vskip 1.0mm \hrule \vskip 1mm
\noindent
\textbf{Step 0.} Choose starting point $(x_1^0,x_2^0,x_3^0;u^0)$. Choose positive constants $\tau_i$ such that $\mathcal{S}_i$ ($i=1,2,3$) are positive semi-definite.
Choose positive constants $\sigma>0$ and $\tau\in(0,(1+\sqrt{5})/2)$. For $k=0,1,\ldots$, do the following operations iteratively.\\
\textbf{Step 1.} Given $x_1^k$, $x_2^k$, $x_3^k$, and $u^k$, compute
$$
x_1^{k+1}=\Pi_{\mathcal{B}_2^{(\mu)}}\Big(x_{1}^{k}-\frac{1}{\tau_{1}}B\big(B^{\top}x_{1}^{k}
               +W^{\top}x_{2}^{k}+K^{\top}x_{3}^{k}-\frac{1}{\sigma}u^{k}\big)\Big).
$$
\textbf{Step 2.} Given $x_1^{k+1}$, $x_2^k$, $x_3^k$, and $u^k$, compute
$$
x_3^{k+1/2}=x_{3}^{k}-\frac{1}{\tau_{3}}K(B^{\top}x_{1}^{k+1}
              +W^{\top}x_{2}^{k}+K^{\top}x_{3}^{k}-\frac{1}{\sigma}u^{k})-\frac{1}{\tau_{3}\sigma}b.
$$
\textbf{Step 3.} Given $x_1^{k+1}$, $x_2^k$, $x_3^{k+1/2}$, and $u^k$, compute
$$
x_2^{k+1}=\Pi_{\mathcal{B}_{\infty}^{(\lambda)}}\Big(x_{2}^{k}-\frac{1}{\tau_{2}}W\big(B^{\top}x_{1}^{k+1}
                +W^{\top}x_{2}^{k}+K^{\top}x_{3}^{k+1/2}-\frac{1}{\sigma}u^{k}\big)\Big).
$$
\textbf{Step 4.} Given $x_1^{k+1}$, $x_2^{k+1}$, $x_3^{k+1/2}$, and $u^k$, compute
$$
x_3^{k+1}=x_{3}^{k}-\frac{1}{\tau_{3}}K\Big(B^{\top}x_{1}^{k+1}
              +W^{\top}x_{2}^{k+1}+K^{\top}x_{3}^{k+1/2}-\frac{1}{\sigma}u^{k}\Big)-\frac{1}{\tau_{3}\sigma}b.
$$
\textbf{Step 5.} Given $x_1^{k+1}$, $x_2^{k+1}$, $x_3^{k+1}$, and $u^k$, compute
$$
u^{k+1}=u^{k}-\tau\sigma(B^{\top}x^{k+1}_{1}+W^{\top}x^{k+1}_{2}+K^{\top}x^{k+1}_{3}).
$$
\end{algorithm}

\subsection{Solving problem (\ref{dual2}) by generalized ADMM}\label{gadmmsec}
This subsection is devoted to applying the  generalized ADMM
of Xiao et al.\cite{xiao40} to solve problem (\ref{dual2}). Again, we view variable $x_{1}$
as one group and view $(x_{2}, x_{3})$ as another, and use the sGS techniqueis
with order $x_{3}\rightarrow x_{2} \rightarrow x_{3}$ in the second group. Then the  generalize ADMM  obeys the following form while it is used to solve
the dual problem (\ref{dual2}), that is
\begin{equation}\label{gspadmm}
\left\{
  \begin{array}{l}
   x_{1}^{k+1}=\arg\min_{x_1\in\mathbb{R}^{2d}}\big\{\mathcal{L}_{\sigma}(x_{1}, \tilde{x}_{2}^{k}, \tilde{x}_{3}^{k}; \tilde{u}^{k})+\frac{\sigma}{2}\| x_1-\tilde{x}_{1}^{k} \|^{2}_{\mathcal{S}_1}\big\},\\[3mm]
   u^{k+1}=\tilde{u}^{k}-\sigma(B^{\top}x^{k+1}_{1}+W^{\top}\tilde{x}^{k}_{2}+K^{\top}\tilde{x}^{k}_{3}),\\[3mm]
   x_{3}^{k+1/2}=\arg\min_{x_3\in\mathbb{R}^p}\big\{\mathcal{L}_{\sigma}(x_{1}^{k+1}, \tilde{x}_{2}^{k}, x_{3}; u^{k+1})+\frac{\sigma}{2}\| x_3-\tilde{x}_{3}^{k} \|^{2}_{\mathcal{S}_3}\big\},\\[3mm]
   x_{2}^{k+1}=\arg\min_{x_2\in\mathbb{R}^q}\big\{\mathcal{L}_{\sigma}(x_{1}^{k+1}, x_{2}, x_{3}^{k+1/2}; u^{k+1})+\frac{\sigma}{2}\| x_2-\tilde{x}_{2}^{k} \|^{2}_{\mathcal{S}_2}\big\},\\[3mm]
   x_{3}^{k+1}=\arg\min_{x_3\in\mathbb{R}^p}\big\{\mathcal{L}_{\sigma}(x_{1}^{k+1}, x_{2}^{k+1}, x_{3}; u^{k+1})+\frac{\sigma}{2}\| x_3-\tilde{x}_{3}^{k} \|^{2}_{\mathcal{S}_3}\big\},\\[3mm]
   \tilde{\omega}^{k+1}=\tilde{\omega}^k+\rho(\omega^{k+1}-\tilde{\omega}^k),
  \end{array}
\right.
\end{equation}
where $\omega=(x_{1}, x_{2}, x_{3}, u)$, $\rho\in(0,2)$, and $\mathcal{S}_1$, $\mathcal{S}_2$ and $\mathcal{S}_3$ are positive semi-definite linear operators defined before.
We note that if $\rho=0$ and $\mathcal{S}_{i}=1$ for $i=1,2,3$, the scheme (\ref{gspadmm})
reduces to (\ref{mrispadmm}) with unite steplength $\tau=1$.
By mimicking the proof of Lemma \ref{lemma1}, we can obtain that the sGS iteration with order $x_3\rightarrow x_2\rightarrow x_3$ can be
viewed together with an additional semi-proximal term based on linear operator. Therefore, the global convergence
of corresponding algorithm for (\ref{gspadmm}) can be obtained from the \cite[Theorem 5.1]{xiao40}, which can be stated as follows without proof.
\begin{theorem}(\cite[Theorem 5.1]{xiao40})\label{theo2}
Suppose that the sequence ${(x^{k}_{1},x^{k}_{2},x^{k}_{3};u^{k})}$ is generated by the iterative scheme (\ref{gspadmm}) from
an initial point $(x_1^0,x_2^0,x_3^0;u^0)$.
If $\rho\in(0, 2)$ and $\mathcal{S}_1$, $\mathcal{S}_2$, and $\mathcal{S}_3$ are positive semi-definite linear operators,
the sequence $\{(x^{k}_{1},x^{k}_{2},x^{k}_{3})\}$ converges to an optimal
solution of the dual problem (\ref{dual2}) and $\{u^{k}\}$ converges to an optimal solution of the primal problem (\ref{mripro}).
\end{theorem}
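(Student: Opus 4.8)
The plan is to show that the iterative scheme (\ref{gspadmm}) is exactly the two-block generalized ADMM of Xiao et al.\ \cite{xiao40} written out for problem (\ref{dual2}), and then to quote \cite[Theorem~5.1]{xiao40}. First I would repeat, essentially verbatim, the argument of Lemma \ref{lemma1}. Freeze $x_1$ at $x_1^{k+1}$ and $u$ at the already-updated multiplier $u^{k+1}$; then the three inner steps $x_3^{k+1/2}\to x_2^{k+1}\to x_3^{k+1}$ in (\ref{gspadmm}) form exactly one symmetric Gauss--Seidel sweep on the map $(x_2,x_3)\mapsto\mathcal{L}_\sigma(x_1^{k+1},x_2,x_3;u^{k+1})$, whose quadratic coupling block is $\mathcal{D}+\mathcal{M}+\mathcal{M}^*$ with the same $\mathcal{D}$, $\mathcal{M}$ as in the proof of Lemma \ref{lemma1} (here $KK^\top$ is invertible --- indeed $KK^\top=I_p$ since $K=\mathcal{P}\mathcal{F}$ --- so $\mathcal{G}_1=\mathcal{M}\mathcal{D}^{-1}\mathcal{M}^*$ is well defined). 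By the sGS decomposition theorem \cite[Theorem~1]{li39}, this sweep is equivalent to the single joint update
\[
(x_2^{k+1},x_3^{k+1})=\arg\min_{x_2,x_3}\Big\{\mathcal{L}_\sigma(x_1^{k+1},x_2,x_3;u^{k+1})+\tfrac{\sigma}{2}\big\|(x_2,x_3)-(\tilde x_2^{k},\tilde x_3^{k})\big\|_{\mathcal{G}}^2\Big\},
\]
with $\mathcal{G}=\mathcal{G}_1+\mathcal{G}_2\succeq 0$ exactly as in Lemma \ref{lemma1}.

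Consequently (\ref{gspadmm}) becomes: minimize in $x_1$ with proximal term $\mathcal{S}_1$, update the multiplier $u$, minimize jointly in $(x_2,x_3)$ with proximal term $\mathcal{G}$, and relax the whole tuple $\omega=(x_1,x_2,x_3,u)$ by $\tilde\omega^{k+1}=\tilde\omega^{k}+\rho(\omega^{k+1}-\tilde\omega^{k})$ --- which is precisely the generalized ADMM of \cite{xiao40} for the two-block problem obtained from (\ref{dual2}) by grouping $(x_2,x_3)$. Next I would verify the standing hypotheses of \cite[Theorem~5.1]{xiao40}: the objective pieces $\delta_{\mathcal{B}_2^{(\mu)}}$, $\delta_{\mathcal{B}_\infty^{(\lambda)}}$ and $\langle b,\cdot\rangle$ are closed proper convex; the constraint is a single linear equation; $\mathcal{S}_1$ and $\mathcal{G}$ are self-adjoint positive semidefinite (the former by the Remark after Theorem \ref{theo1}, the latter by Lemma \ref{lemma1}); $\rho\in(0,2)$ by assumption; and the KKT system, which here is exactly (\ref{KKTnew}), is solvable because (\ref{mripro}) admits a solution with finite optimal value. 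Then \cite[Theorem~5.1]{xiao40} yields convergence of $\{(x_1^k,x_2^k,x_3^k)\}$ to a solution of (\ref{dual2}) and of $\{u^k\}$ to a solution of the Lagrangian dual of (\ref{dual2}); since the latter is (equivalent to) the primal problem (\ref{mripro}), the stated conclusion follows.

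The part that I expect to require care --- rather than a one-line citation --- is the bookkeeping in the first step: one must confirm that the inner update $x_3^{k+1/2}$ in (\ref{gspadmm}) uses the same proximal center $\tilde x_3^{k}$ and the post-update multiplier $u^{k+1}$ that \cite[Theorem~1]{li39} demands, so that the sGS sweep genuinely collapses to the displayed joint minimization and the resulting $\mathcal{G}$ is identical to the one in Lemma \ref{lemma1}. Once this identification is in place, Theorem \ref{theo2} is an immediate corollary of \cite[Theorem~5.1]{xiao40}, which is why we state it without a separate proof.
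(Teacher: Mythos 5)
Your proposal matches the paper's own treatment: the paper states Theorem \ref{theo2} without a separate proof, justifying it in the preceding paragraph by exactly your route --- mimic Lemma \ref{lemma1} to collapse the sweep $x_3\rightarrow x_2\rightarrow x_3$ into a joint $(x_2,x_3)$ update with the semi-proximal term $\mathcal{G}$, recognize (\ref{gspadmm}) as the two-block generalized ADMM, and invoke \cite[Theorem~5.1]{xiao40}. Your added care about the proximal centers $\tilde x_2^k,\tilde x_3^k$ and the post-update multiplier $u^{k+1}$, and the observation that $KK^\top=I_p$ makes $\mathcal{G}_1$ well defined, are correct and consistent with the paper.
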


To end this subsection, we list the full steps of sGS-ADMM\_G while it is used to solve the dual
problem (\ref{dual2}).

\begin{algorithm}
\noindent
{\bf Algorithm: sGS-ADMM\_G}
\vskip 1.0mm \hrule \vskip 1mm
\noindent
\textbf{Step 0.} Choose starting point $(x_1^0,x_2^0,x_3^0;u^0)$. Choose positive constants $\tau_i$ such that $\mathcal{S}_i$ ($i=1,2,3$) are positive semi-definite.
Choose positive constants $\sigma>0$ and $\rho\in(0,2)$. For $k=0,1,\ldots$, do the following operations iteratively.\\
\textbf{Step 1.} Given $\tilde{x}_1^k$, $\tilde{x}_2^k$, $\tilde{x}_3^k$, and $\tilde{u}^k$, compute
$$
x_1^{k+1}=\Pi_{\mathcal{B}_2^{(\mu)}}\Big(\tilde{x}_{1}^{k}-\frac{1}{\tau_{1}}B\big(B^{\top}\tilde{x}_{1}^{k}
               +W^{\top}\tilde{x}_{2}^{k}+K^{\top}\tilde{x}_{3}^{k}-\frac{1}{\sigma}\tilde{u}^{k}\big)\Big).
$$
\textbf{Step 2.} Given $x_1^{k+1}$, $\tilde{x}_2^{k}$, $\tilde{x}_3^{k}$, and $\tilde{u}^k$, compute
$$
u^{k+1}=\tilde{u}^{k}-\sigma(B^{\top}x^{k+1}_{1}+W^{\top}\tilde{x}^{k}_{2}+K^{\top}\tilde{x}^{k}_{3}).
$$
\textbf{Step 3.} Given $x_1^{k+1}$, $\tilde{x}_2^k$, $\tilde{x}_3^k$, and $u^{k+1}$, compute
$$
x_3^{k+1/2}=\tilde{x}_{3}^{k}-\frac{1}{\tau_{3}}K\Big(B^{\top}x_{1}^{k+1}
              +W^{\top}\tilde{x}_{2}^{k}+K^{\top}\tilde{x}_{3}^{k}-\frac{1}{\sigma}u^{k+1}\Big)-\frac{1}{\tau_{3}\sigma}b.
$$
\textbf{Step 4.} Given $x_1^{k+1}$, $\tilde{x}_2^k$, $x_3^{k+1/2}$, and $u^{k+1}$, compute
$$
x_2^{k+1}=\Pi_{\mathcal{B}_{\infty}^{(\lambda)}}\Big(\tilde{x}_{2}^{k}-\frac{1}{\tau_{2}}W\big(B^{\top}x_{1}^{k+1}
                +W^{\top}\tilde{x}_{2}^{k}+K^{\top}x_{3}^{k+1/2}-\frac{1}{\sigma}u^{k+1}\big)\Big).
$$
\textbf{Step 5.} Given $x_1^{k+1}$, $x_2^{k+1}$, $\tilde{x}_3^{k}$, and $u^k$, compute
$$
x_3^{k+1}=\tilde{x}_{3}^{k}-\frac{1}{\tau_{3}}K\Big(B^{\top}x_{1}^{k+1}
              +W^{\top}x_{2}^{k+1}+K^{\top}\tilde{x}_{3}^{k}-\frac{1}{\sigma}u^{k+1}\Big)-\frac{1}{\tau_{3}\sigma}b.
$$
\textbf{Step 6.} Given $x_1^{k+1}$, $x_2^{k+1}$, $x_3^{k+1}$, and $u^{k+1}$, compute
$$
\tilde{\omega}^{k+1}=\tilde{\omega}^k+\rho(\omega^{k+1}-\tilde{\omega}^k).
$$
\end{algorithm}

\section{Numerical experiments}\label{numsec}

In this section, we construct a series of numerical experiments to evaluate the practical performance of sGS-ADMM
and sGS-ADMM\_G against the state-of-the-art algorithm 2SFPPA. All the experiments are performed with Microsoft Windows 10 and MATLAB
R2018a, and run on a PC with an Intel Core i7 CPU at 1.80 GHz and 8 GB of memory.

\subsection{General descriptions}
First of all, some descriptions should be given for the following series of tests. We conduct experiments on some typical MRI data: a ``Shepp-Logan" phantom,
some  brain images. We do the reconstruction from the retrospectively
undersampled Fourier coefficients of these images.
The k-space undersampling is simulated by using the following masks: Cartesian sampling with random phase
encoding \cite{ldfive}, 2D random sampling \cite{ldfive,qg,x50}, pseudo radial
sampling \cite{trj}.
The Haar wavelet transform $W\in\mathbb{R}^{q\times d}$ is chosen to be
non-decimated and thus we have that $q=4d$. We set parameters in objective function
 be same as the reference \cite{lxzone}. Accordingly,
we set the diagonal entries of the diagonal matrix $\Lambda$ as follows
\begin{equation*}
 \lambda_{i}=\left\{
\begin{array}{ll}
  0,\ & i\in\{1, 2, \ldots, d\}, \\[2mm]
  \frac{1}{2},\ & i\in\{d+1, d+2, \ldots, q\},
\end{array}
\right.
\end{equation*}
and take the regularization parameter in (\ref{mripro}) as $\mu=3$. For the 2SFPPA, we compile the code according to
the Algorithm $1$ in \cite{lxzone}, and set the parameters $\alpha_{1}=1/8$ and $\alpha_{2}= \alpha_{3}=0.9$, which have been illustrated to be able to achieve the best results in most cases for each dataset. For the other algorithms,
we fix $\tau_{1}=8$, and $\tau_{2}=\tau_{3}=10/9$. Besides, the steplength $\tau$ involved in sGS-ADMM  is set to
be the thumb value $1.618$, and the relaxation factor in sGS-ADMM\_G is set as $\rho=1.4$.

Here, we use the peak signal-to-noise ratio (PSNR) in the unit of dB to measure the quality of the restored images, i.e.,$$\mbox{PSNR}:=10\log_{10}\frac{255 \sqrt{d}}{\|\bar{u}-u\|}(\mbox{dB}).$$
Moreover, different from \cite{z51,x50},  we adopt a couple of rigorous criteria in evaluating the validity of the involved algorithms. First, we use the relative $\ell_2$-norm error (RLNE) defined as $$\text{RLNE}:=\frac{\|\bar{u}-u\|}{\|u\|},$$
where $\bar{u}$ is the ground truth image and $u$ is the reconstructed image. Second, based on KKT system (\ref{KKTnew}), we use the following KKT residuals criterion defined as
$$
\text{RelErr}:=\max\{\eta_{P}, \eta_{D}, \eta_1, \eta_2 \},
$$
where
\begin{equation*}
\left\{
  \begin{array}{l}
   \eta_{P}:=\dfrac{\|K u-b\|}{1+\|b\|},\\[3mm]
   \eta_D:=\|B^{\top}x_{1}+W^{\top}x_{2}+K^{\top}x_{3}\|,\\[3mm]
   \eta_{1}:=\dfrac{\|x_1-\Pi_{\mathcal{B}_2^{(\mu)}}(x_1+Bu)\|}{1+\|x_1\|+\|Bu\|},\\[3mm]
   \eta_{2}:=\dfrac{\|x_2-\Pi_{\mathcal{B}_{\infty}^{(\lambda)}}(x_2+Wu)\|}{1+\|x_2\|+\|Wu\|}.
  \end{array}
\right.
\end{equation*}
In order to balance the primal and dual infeasibilities for accelerating the iteration of both presented algorithms, we use the variable penalty parameters strategies defined
as follows. Moreover, 2SFPPA also use this updating technique for comparisons in a fair way.
We initialize $\sigma_0=5e-3$ (resp. $\beta_0$ in 2SFPPA) and update iteratively via the following form:
$$
\sigma_{k+1}=\left\{
\begin{array}{lll}
  \min\{1.25\sigma_{k}, 10^{-2}\},\ & \mbox{if} \ \eta_{P}/\eta_{D}\leq\ 1/5, \\[1mm]
  \max\{0.8\sigma_{k}, 10^{-5}\},\ & \mbox{if} \ \eta_{P}/\eta_{D}\geq 5, \\[1mm]
  \sigma_{k},\ & \mbox{otherwise}.
\end{array}
\right.
$$
The iterative process of each algorithm is
terminated if RelErr or RLNE is sufficiently small,
or the maximum
iteration number is achieved.
We tried different starting points for each algorithm and found that all of them are insensitive
towards starting points. Therefore, we initialize $(x_1^0, x_2^0, x_3^0, u^0)$ as zero in all experiments of the following.

\subsection{Experiments on phantom data}

In this subsection, we test the efficiency of each algorithm using a simple ``Shepp-Logan" phantom image with size $256\times 256$ as shown at
leftmost plot in Figure \ref{fig2}. The sampling pattern is simulated by using the pseudo radial sampling mask with sampling rate $6.5\%$ which is displayed
at the second plot in Figure \ref{fig2}.  In this test, we consider the noiseless case, i.e., $\epsilon=0$ in  (\ref{orig}).

Firstly, we compare the reconstruction qualities obtained by each algorithm within $100$ iterations. The recovery images produced by 2SFPPA, sGS-ADMM
and sGS-ADMM\_G are listed from the third to the last plot in Figure \ref{fig2}, respectively.  As can be observed from the
last three images that, all the algorithms produced acceptable reconstructions within so few number of iterations.
To be precise, 2SFPPA obtained an image with RLNE $=7.21\%$, which is slightly larger than $2.38\%$ and $2.17\%$ derived by other two algorithms.
To further visibly illustrate the superiority of the ADMMs, we draw the $10$ times scaled difference images of (c-e) in Figure \ref{fig2} to the true
image (a). The compared heat images are listed in Figure \ref{fig22}, respectively. We observe that the reconstructed images of sGS-ADMM and sGS-ADMM\_G
exhibit obvious benefits  compared with 2SFPPA.

\begin{figure}[ht]
\begin{center}
\begin{tabular}{c}
\subfigure[]{\includegraphics[width=0.19\textwidth,height=0.19\textwidth]{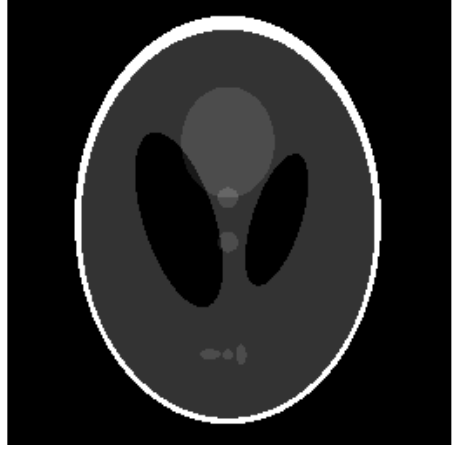}}
\subfigure[]{\includegraphics[width=0.19\textwidth,height=0.19\textwidth]{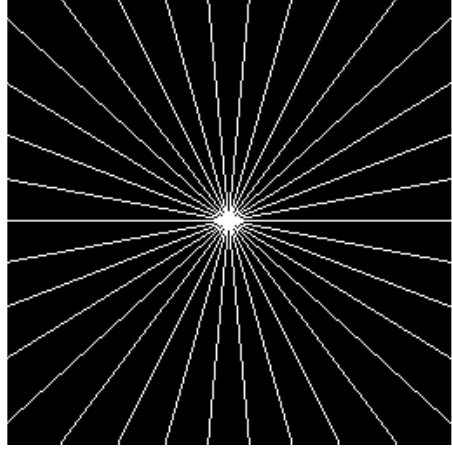}}
\subfigure[]{\includegraphics[width=0.19\textwidth,height=0.19\textwidth]{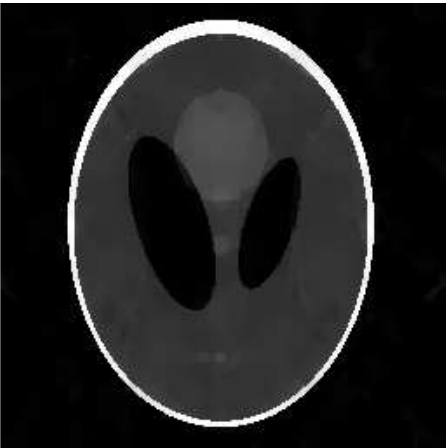}}
\subfigure[]{\includegraphics[width=0.19\textwidth,height=0.19\textwidth]{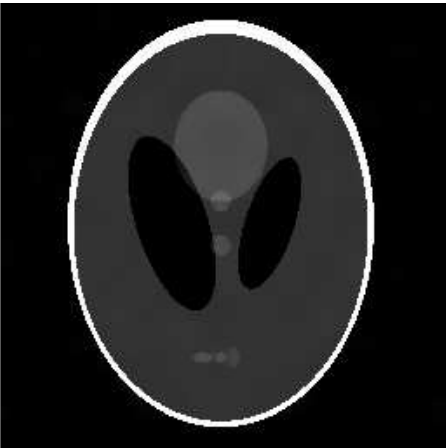}}
\subfigure[]{\includegraphics[width=0.19\textwidth,height=0.19\textwidth]{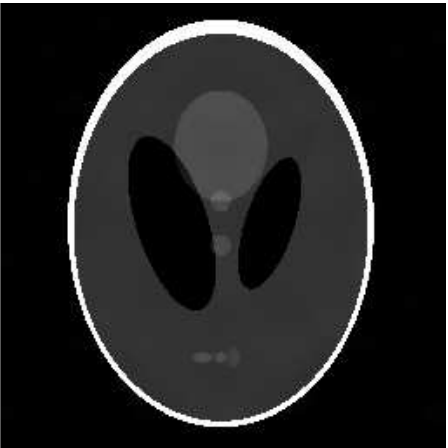}}
\end{tabular}
\end{center}
\caption{\scriptsize Reconstruction results using the phantom image: (a) the true image, (b) the pseudo radial sampling pattern of
sampling rate $6.5\%$, (c)-(e) the reconstructed images by 2SFPPA, sGS-ADMM
and sGS-ADMM\_G with the RLNE errors are $7.21\%$, $2.38\%$ and $2.17\%$ within $100$ iterations,
respectively.}
\label{fig2}
\end{figure}
\begin{figure}[ht]
\begin{center}
\begin{tabular}{c}
\subfigure[]{\includegraphics[width=0.25\textwidth,height=0.25\textwidth]{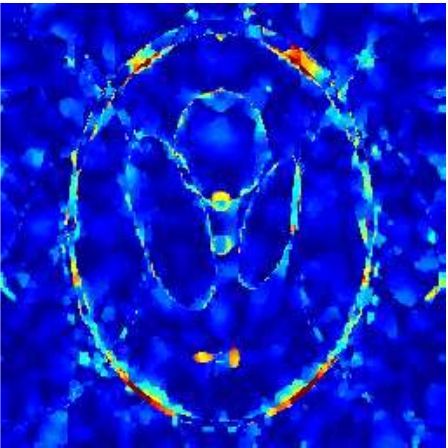}}
\subfigure[]{\includegraphics[width=0.25\textwidth,height=0.25\textwidth]{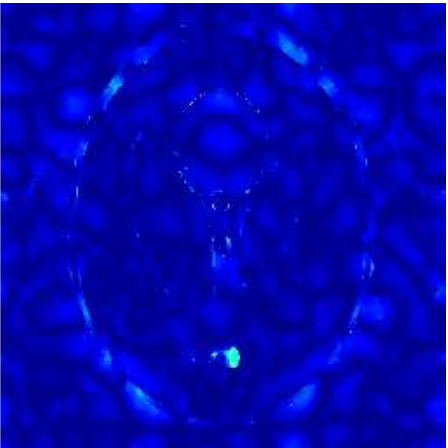}}
\subfigure[]{\includegraphics[width=0.33\textwidth,height=0.255\textwidth]{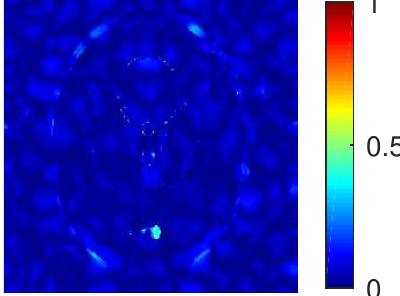}}
\end{tabular}
\end{center}
\caption{\scriptsize The $10\times$ scaled difference images of reconstructed images of 2SFPPA,  sGS-ADMM
and sGS-ADMM\_G to the ground truth image.}
\label{fig22}
\end{figure}

Secondly, we determine each algorithm be successful if the criterion ``$\text{RLNE} \leq \text{Tol}$" is achieved.
Then we investigate the numerical performance of each algorithm with different error tolerances Tol. In this test, we use the parameters values
by default as the aforementioned except that the iterative process is forced to be terminated
when maximum iterations $3000$ is exceed without achieving convergence. We report the detailed results in Table \ref{tab1}
with respect to the numbers of iterations (`Iter'), the PSNR values (`PSNR'), and the computing time in seconds (`CPU')
for ``Shepp-Logan" phantom reconstruction. In addition, the symbol ``-" indicates that the corresponding algorithm cannot achieve the given accuracy within
permissible number of iterations.
\begin{table}[htbp]
{\scriptsize\centering \caption{Numerical comparisons results for ``Shepp-Logan" phantom}
\begin{tabular}{l | ccc | ccc | ccc }
\hline
\multicolumn{1}{c|}{} & \multicolumn{3}{c|}{$\text{Tol}=10^{-2}$} & \multicolumn{3}{c|}{$\text{Tol}=10^{-3}$} & \multicolumn{3}{c}{$\text{Tol}=10^{-4}$}\\
\hline
        Algorithm  &Iter&PSNR(dB)&CPU&Iter&PSNR(dB)&CPU&Iter&PSNR(dB)&CPU\\
\hline
2SFPPA              &182 &50.2   &26.3       &1804 &60.2   &299.8  &$-$  &$-$      &$-$ \\
\hline
sGS-ADMM            &166 &50.2   &28.6       &616  &60.3   &127.9  &1754 &70.2     &291.3 \\
\hline
sGS-ADMM\_G         &135 &50.3   &20.3      &739  &60.2    &121.3  &1951 &70.2     &290.9 \\
\hline
\end{tabular}\label{tab1}
}
\end{table}
\begin{figure}[h!]
\begin{center}
\begin{tabular}{c}
\subfigure[]{\includegraphics[width=0.3\textwidth,height=0.3\textwidth]{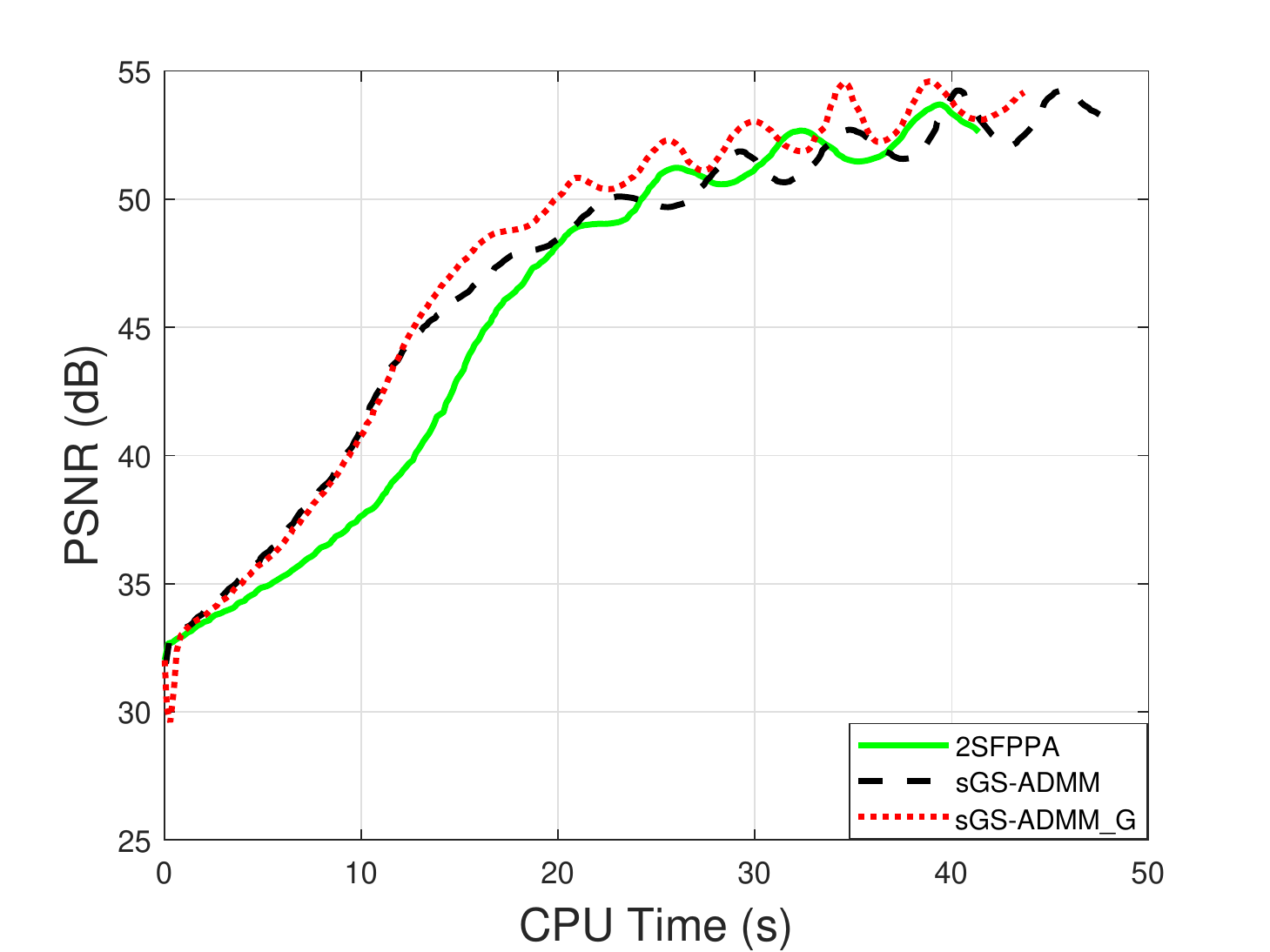}}
\subfigure[]{\includegraphics[width=0.3\textwidth,height=0.3\textwidth]{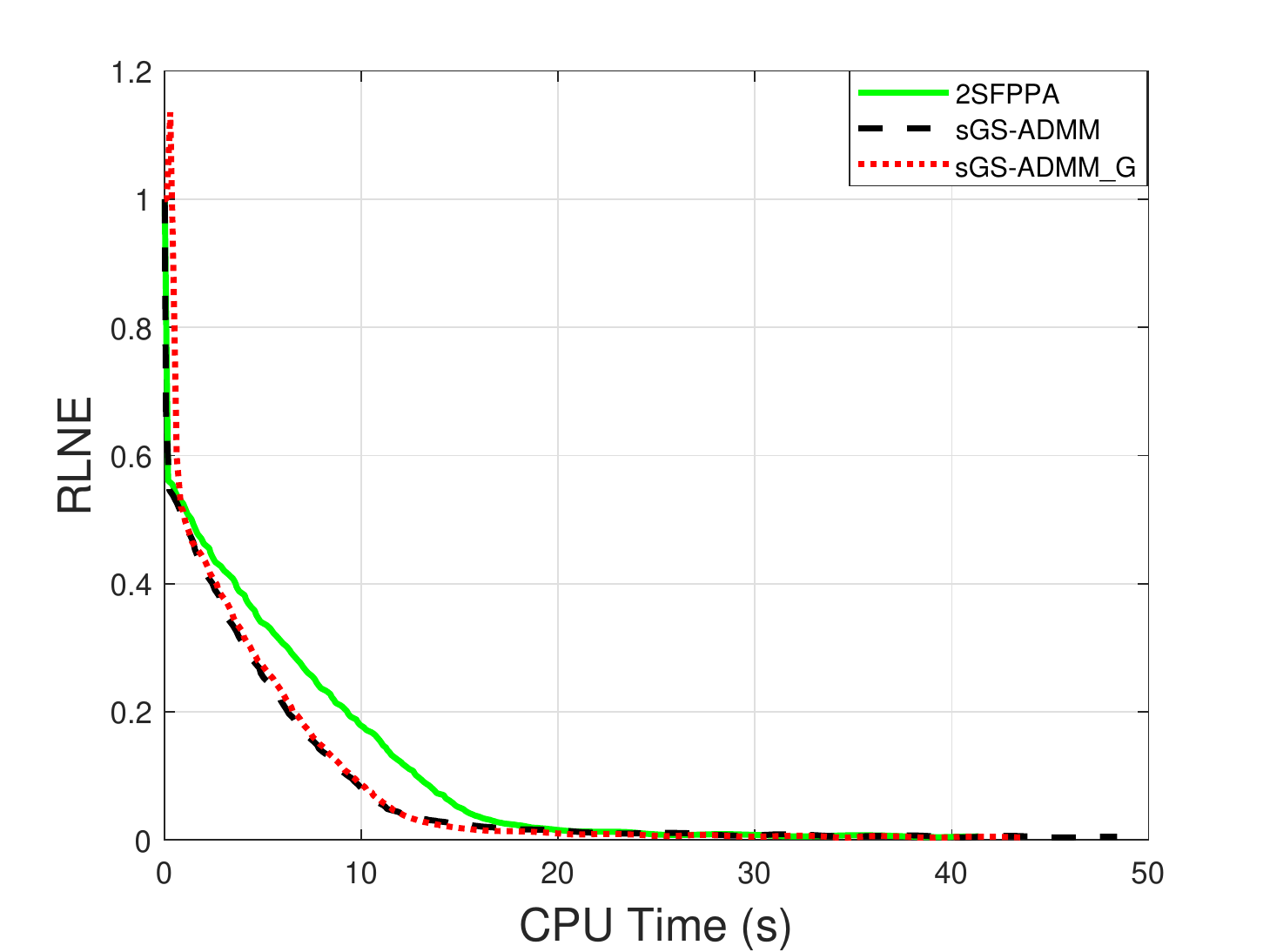}}
\subfigure[]{\includegraphics[width=0.3\textwidth,height=0.3\textwidth]{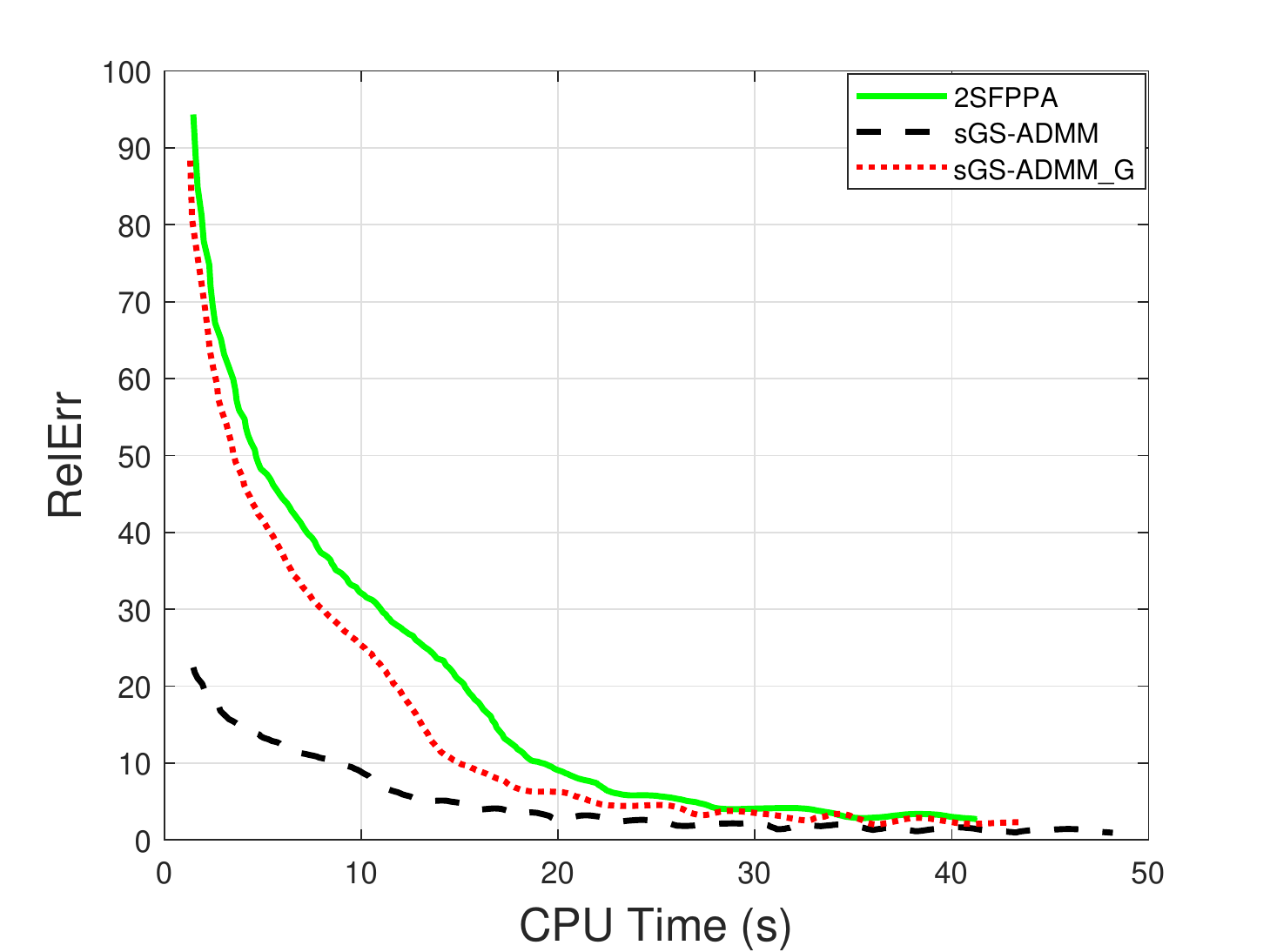}}\\
\subfigure[]{\includegraphics[width=0.3\textwidth,height=0.3\textwidth]{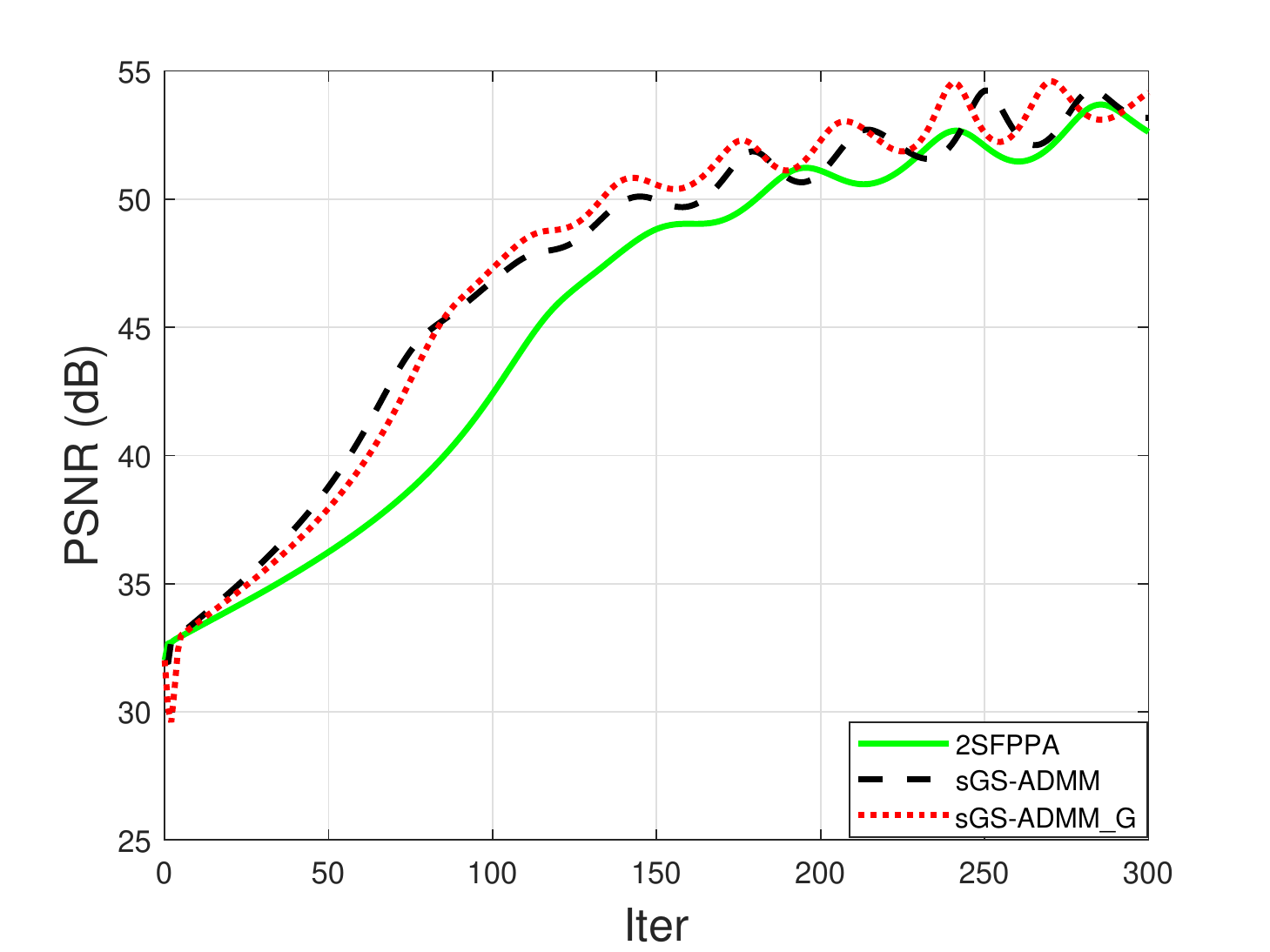}}
\subfigure[]{\includegraphics[width=0.3\textwidth,height=0.3\textwidth]{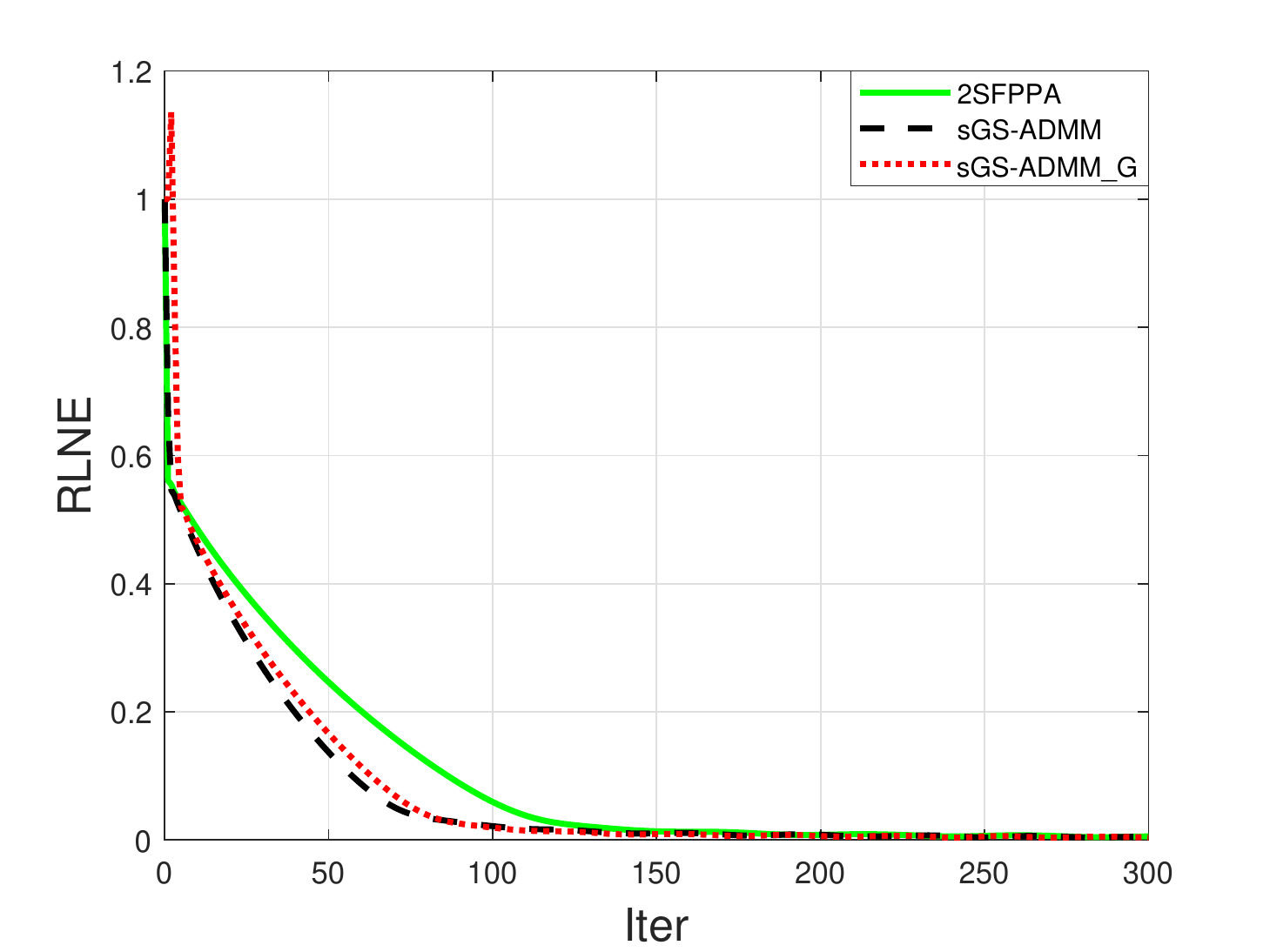}}
\subfigure[]{\includegraphics[width=0.3\textwidth,height=0.3\textwidth]{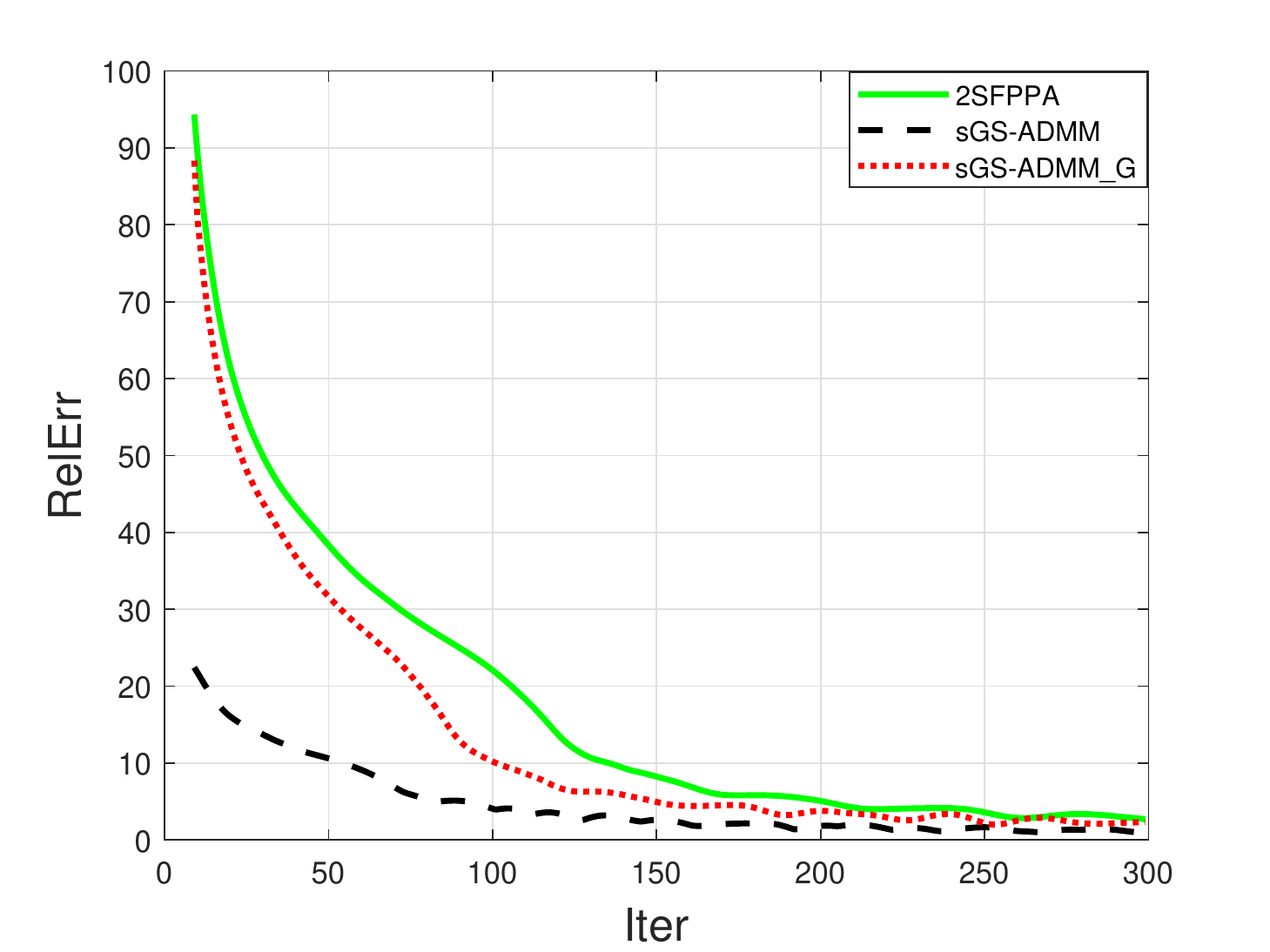}}
\end{tabular}
\end{center}
\caption{{\scriptsize Reconstruction results using the phantom image. (a) PSNR versus computational time;
(b) RLNE versus computational time;
(c) RelErr versus computational time;
(d) PSNR versus number of iterations; (e) RLNE versus number of iterations; (f) RelErr versus number of iterations.}}
\label{fig3}
\end{figure}

We can observe from this table that almost all the tested algorithms can work successfully to produce the reconstructions except
2SFPPA failed to derive higher precision solutions.
All the algorithms require more iterations and computing time with the improvement
of accuracy, and the ADMM type methods apparently perform much better than 2SFPPA.
To more visually examine the algorithms' performance, we draw the curves of PSNR, RLNE and RelErr with  respect to the iteration numbers and running time increase in Figure \ref{fig3}. It is worth mentioning that we abandoned the first 10 iterations in order to better highlight the trend of RelErr  curve in the plots of righthand side. We find that the RelErr values are always higher than the RLNE values in the iteration process, so
the stopping criteria can be chosen as $\text{RLNE}\leq\text{Tol}=5e-3$ or the maximum
iteration number 300 is achieved.
As can be seen from the two plots at the lefthand side of this figure, the curves derived by 2SFPPA are almost always at the bottom which indicate that 2SFPPA is the slowest to increase
the PSNR values. The downward trend of RLNE and RelErr shows that all the algorithms are effective in the middle and the right side plots. Moreover,
the sGS-ADMM and sGS-ADMM\_G decrease faster than that from 2SFPPA in sense of RLNE and RelErr values.
In summary, those simple tests preliminarily illustrate the efficiency of these
employed algorithms in recovering phantom images.

\subsection{Experiments on brain imaging data}

In this subsection, we further investigate the validity of sGS-ADMM and sGS-ADMM\_G.
The experiments are conducted on a couple of T2-weighted brain images where the ground truth images with real and imaginary
parts are shown in the first plots of Figures \ref{fig51} and \ref{fig52}.
The brain images to be tested are two slices acquired from a healthy
volunteer at a 3T Siemens Trio Tim MRI scanners using the T2-weighted turbo spin echo sequence.  The  Figure \ref{fig51} (a) is a $256 \times 256$ brain image
($\text{TR/TE}=6100/99 \text{ms}$, $\text{FOV}=220\times220 \text{mm}^{2}$, $\text{slice thickness}=3.0 \text{mm}$), and the Figure \ref{fig52} (a) is another $512 \times 512$ brain image
($\text{TR/TE} = 5000/97 \text{ms}, \text{FOV} = 230 \times 187 \text{mm}^2, \text{slice thickness} = 5.0 \text{mm}$). Again, the pseudo radial sampling scheme is used in these tests.
The sampling rate of  the foregoing image is chosen $18.11\%$ as shown in Figure \ref{fig51} (b), and  $9.31\%$ as shown in Figure \ref{fig52} (b), respectively.
As usual in MRI \cite{guass1,guass2}, the i.i.d. complex Gaussian noise with standard deviation $\varrho=0.005$  is added to the k-space
to verify the robustness of the algorithms. The last three plots (c-e) of Figures \ref{fig51} and \ref{fig52}  represent the restoration results of algorithms 2SFPPA, sGS-ADMM and sGS-ADMM\_G within 40 iterations for the two brain images, respectively.
\begin{figure}[ht]
\begin{center}
\begin{tabular}{c}
\subfigure[]{\includegraphics[width=0.19\textwidth,height=0.19\textwidth]{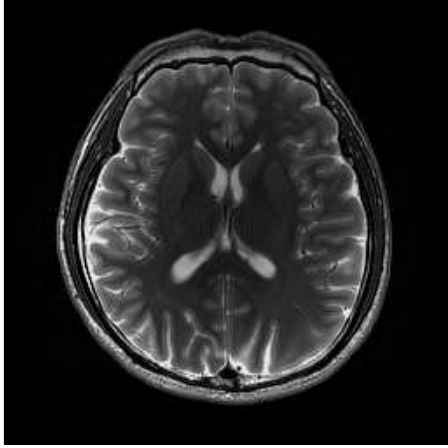}}
\subfigure[]{\includegraphics[width=0.19\textwidth,height=0.19\textwidth]{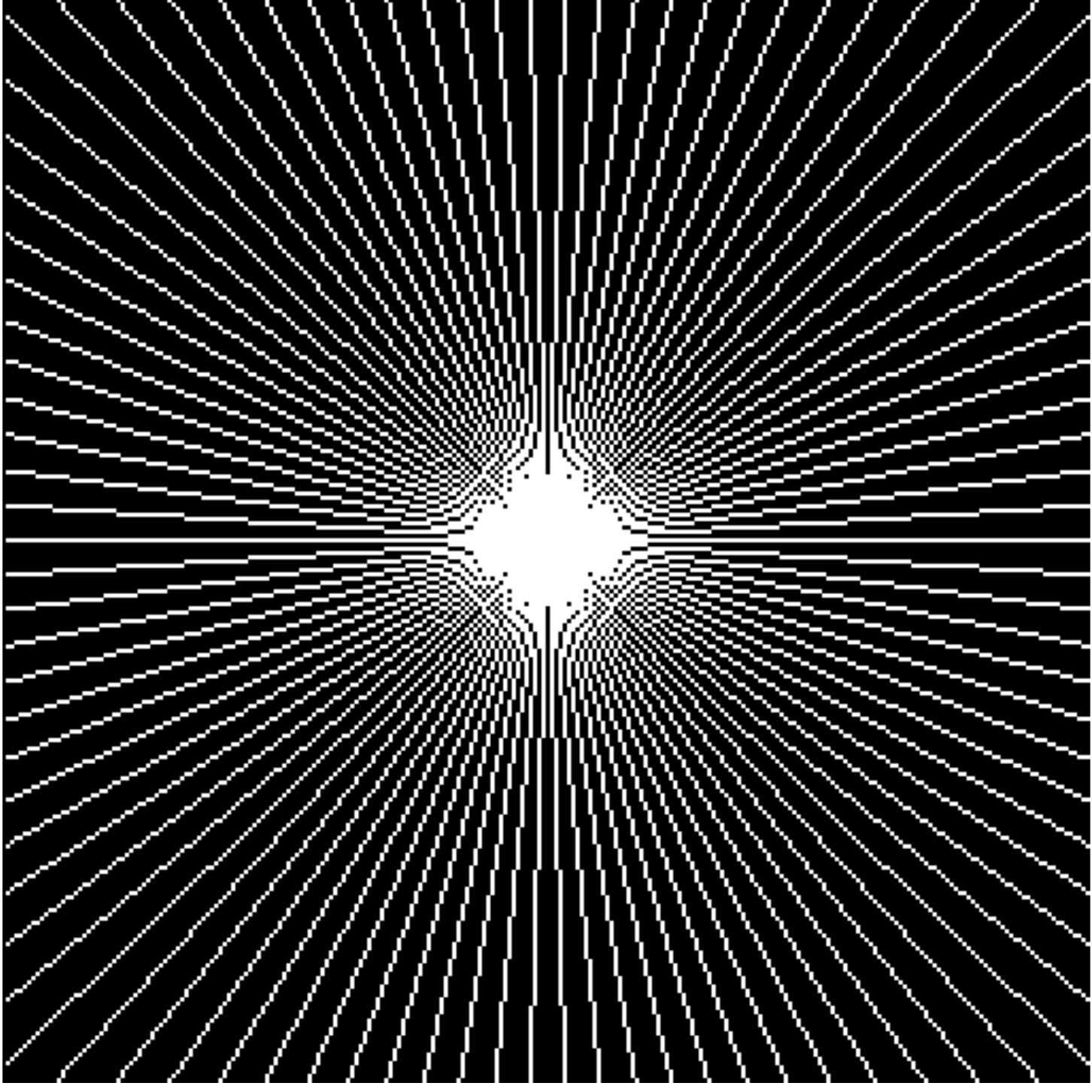}}
\subfigure[]{\includegraphics[width=0.19\textwidth,height=0.19\textwidth]{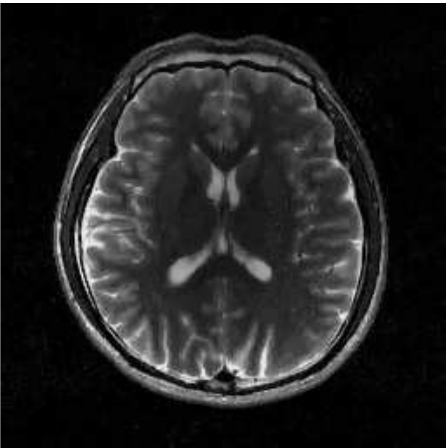}}
\subfigure[]{\includegraphics[width=0.19\textwidth,height=0.19\textwidth]{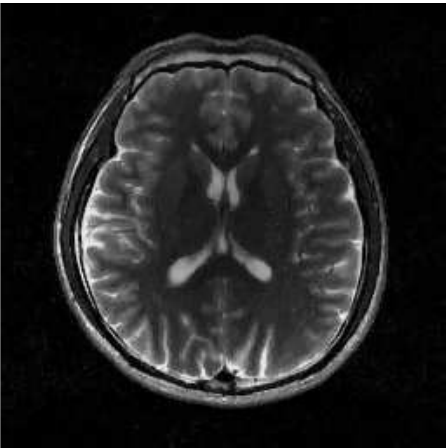}}
\subfigure[]{\includegraphics[width=0.19\textwidth,height=0.19\textwidth]{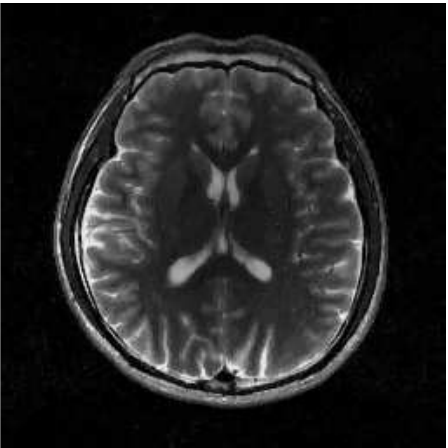}}
\end{tabular}
\end{center}
\caption{\scriptsize  Reconstruction brain images $256\times 256$ T2-brain images and using the pseudo radial sampling pattern of sampling rate 18.11\%. (a) A full sampled brain image; (b) pseudo radial sampling mask; (c-e) reconstructed images by 2SFPPA, sGS-ADMM and sGS-ADMM\_G, respectively.}
\label{fig51}
\end{figure}
\begin{figure}[ht]
\begin{center}
\begin{tabular}{c}
\subfigure[]{\includegraphics[width=0.19\textwidth,height=0.19\textwidth]{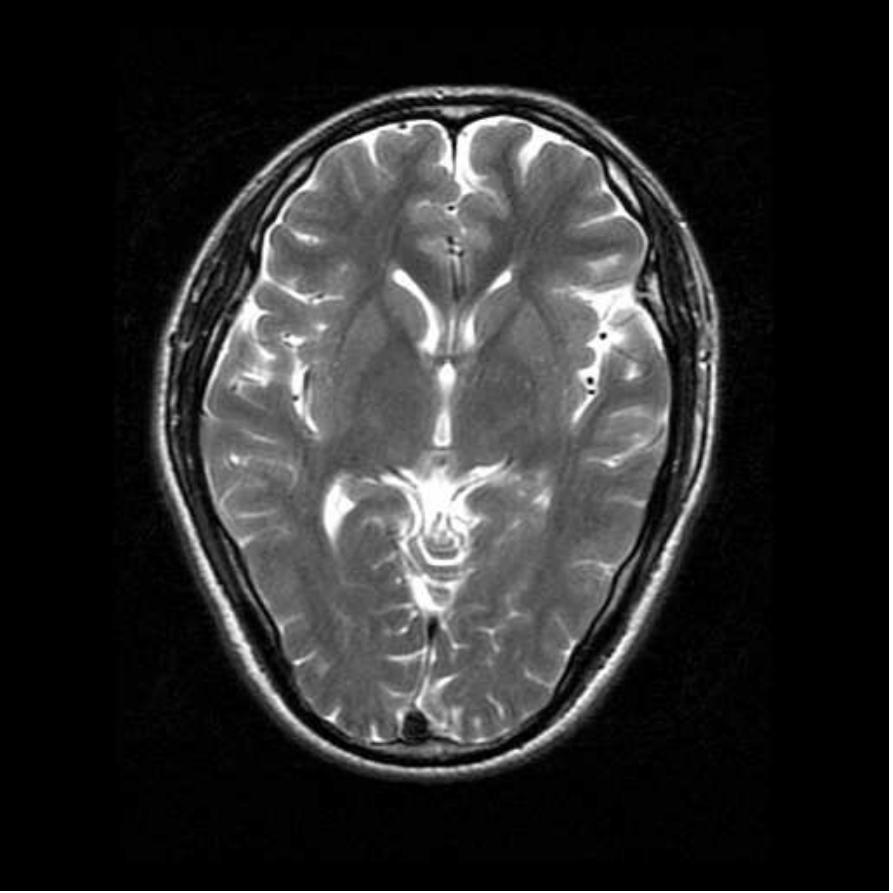}}
\subfigure[]{\includegraphics[width=0.19\textwidth,height=0.19\textwidth]{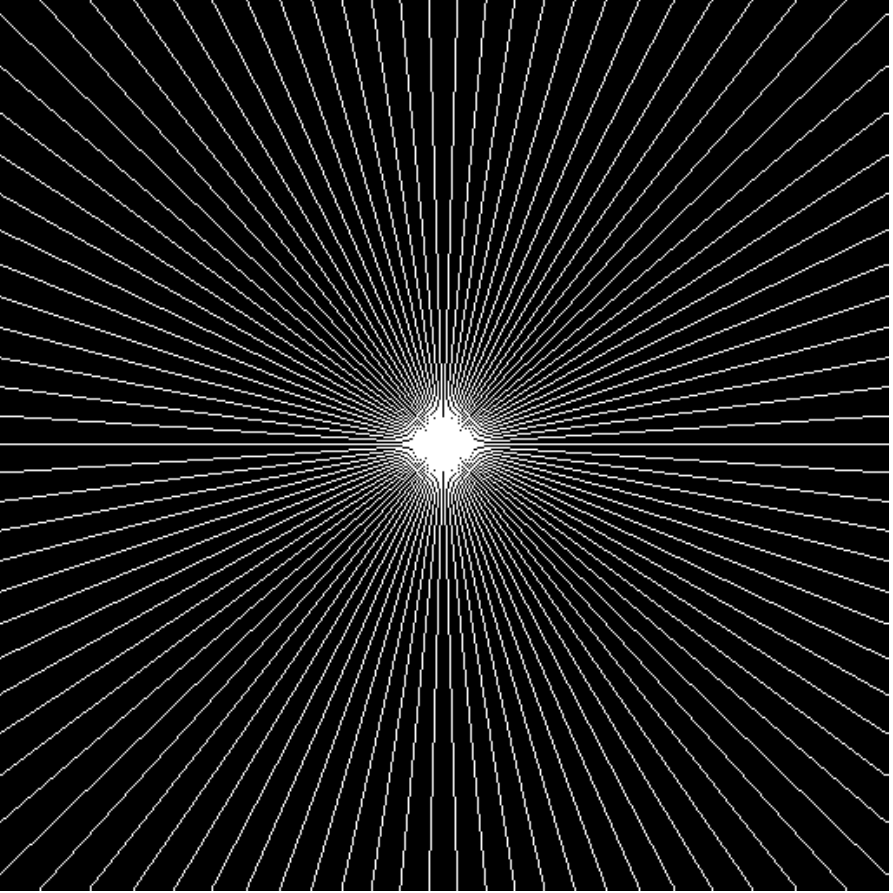}}
\subfigure[]{\includegraphics[width=0.19\textwidth,height=0.19\textwidth]{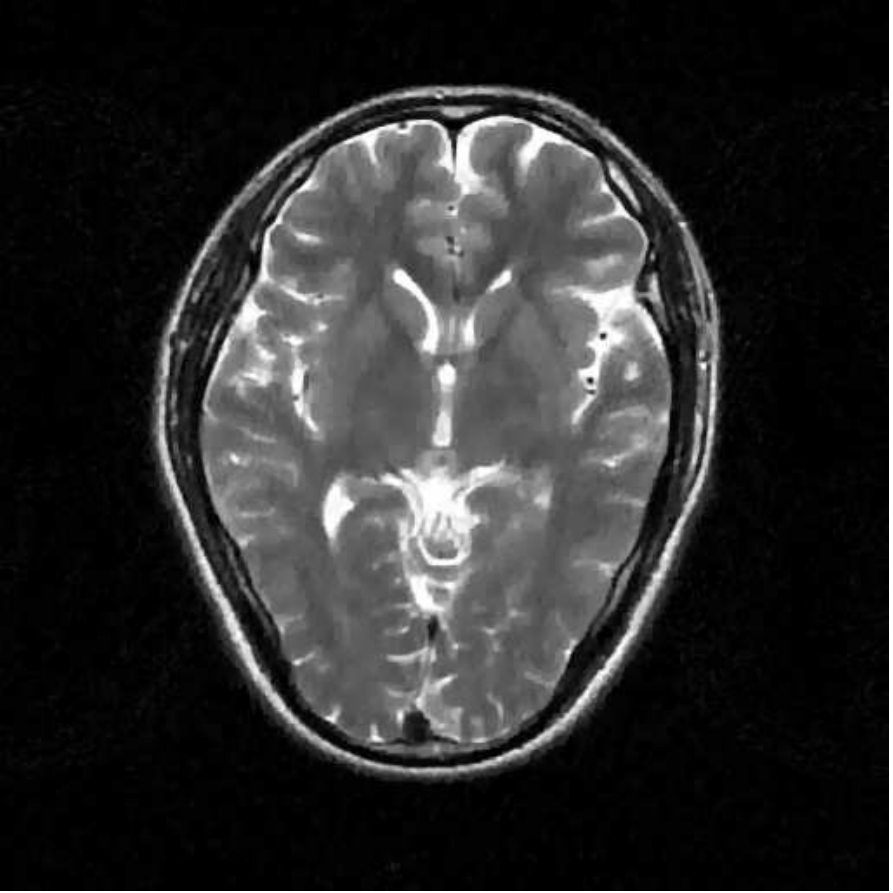}}
\subfigure[]{\includegraphics[width=0.19\textwidth,height=0.19\textwidth]{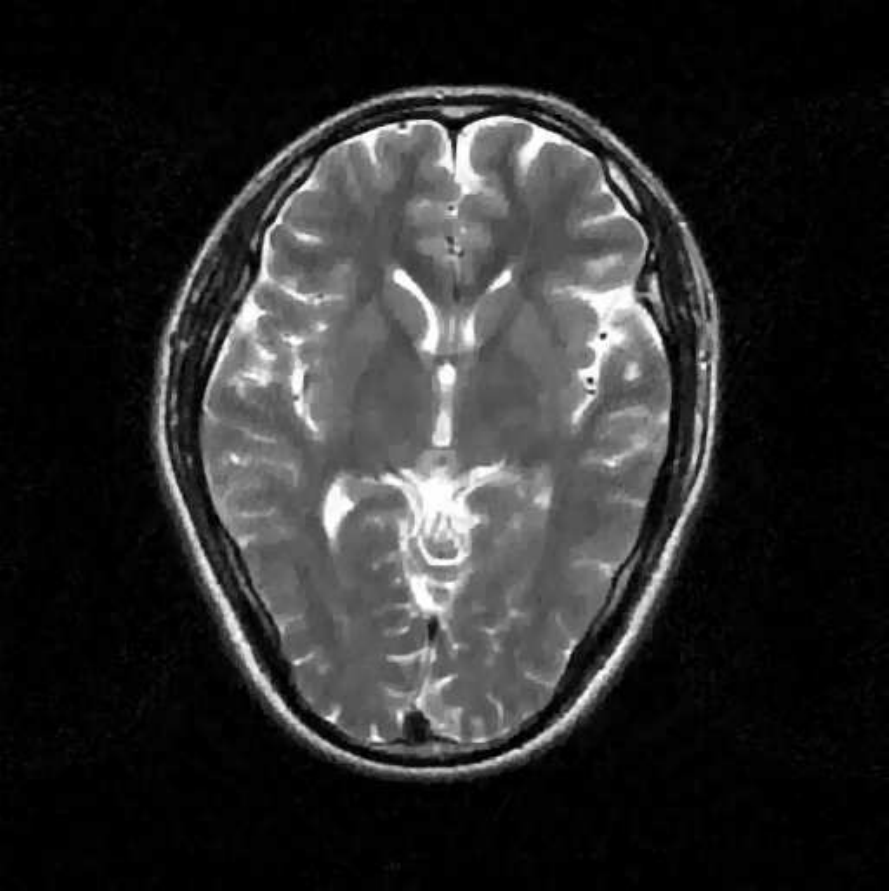}}
\subfigure[]{\includegraphics[width=0.19\textwidth,height=0.19\textwidth]{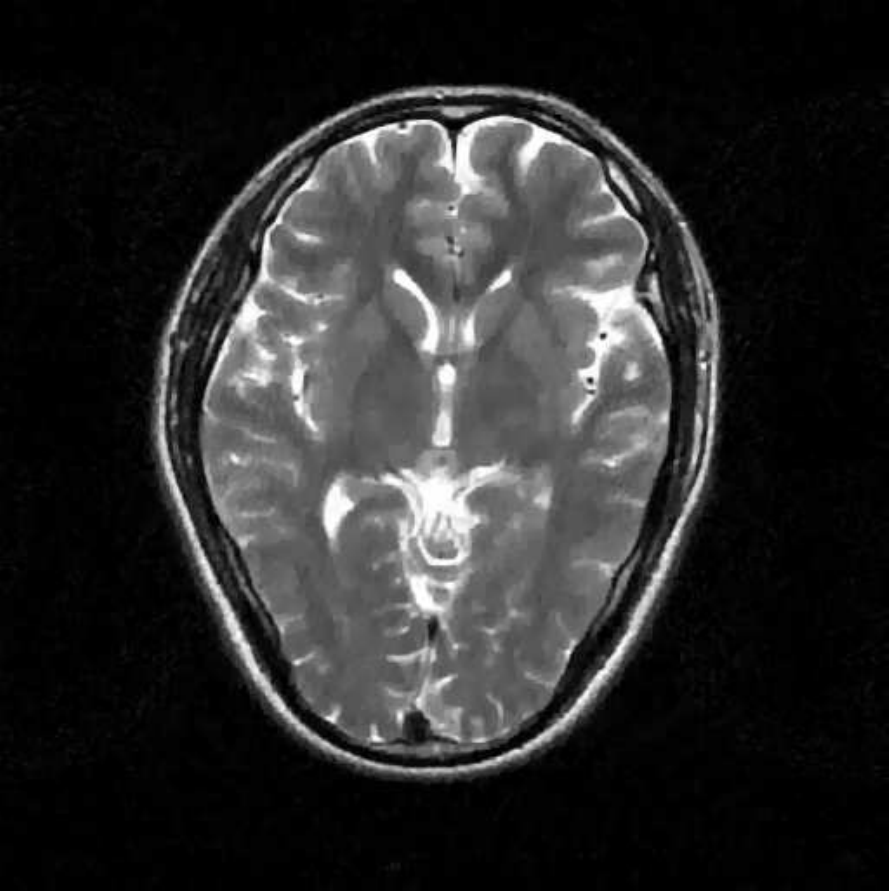}}
\end{tabular}
\end{center}
\caption{\scriptsize Reconstruction brain images $512\times 512$ T2-brain images and using the pseudo radial sampling pattern of sampling rate 9.31\%. (a) A full sampled brain image; (b) pseudo radial sampling mask; (c-e) Reconstructed images using 2SFPPA, sGS-ADMM and sGS-ADMM\_G, respectively.}
\label{fig52}
\end{figure}

We can see that all the tested algorithms achieved the acceptable reconstruction results within so few iteration steps although it is hard to see the
comparison results of the three algorithms clearly. To see the latent convergence behaviors of each algorithm, we
draw some curves regrading PSNR, RLNE and RelErr of each algorithm for each tested image with respect to the time and iteration numbers.
As before, we still abandon the previous $10$ ill-conditioned iterations here when drawing the curves about RelErr.
Details on each tested image can be found in Figures \ref{fig53} and \ref{fig54}.
As can be seen from these figures that, the results of sGS-ADMM\_G fluctuated at the beginning, but it eventually been flat.
From the PSNR curves, we can see that 2SFPPA has the fastest upward trend at the beginning, but sGS-ADMM catched up soon
and kept the advantage to the end. This phenomenon is also happed similarly in the RLNE curves.
For the curves derived by RelErr, we see that ADMM type methods
decrease slightly faster than 2SFPPA as shown in the right plots.
Based on the above explanations, we can conclude that the ADMMs are really effective and practical in rebuilding real data of different sizes.

\begin{figure}[h!]
\begin{center}
\begin{tabular}{c}
\subfigure[]{\includegraphics[width=0.3\textwidth,height=0.3\textwidth]{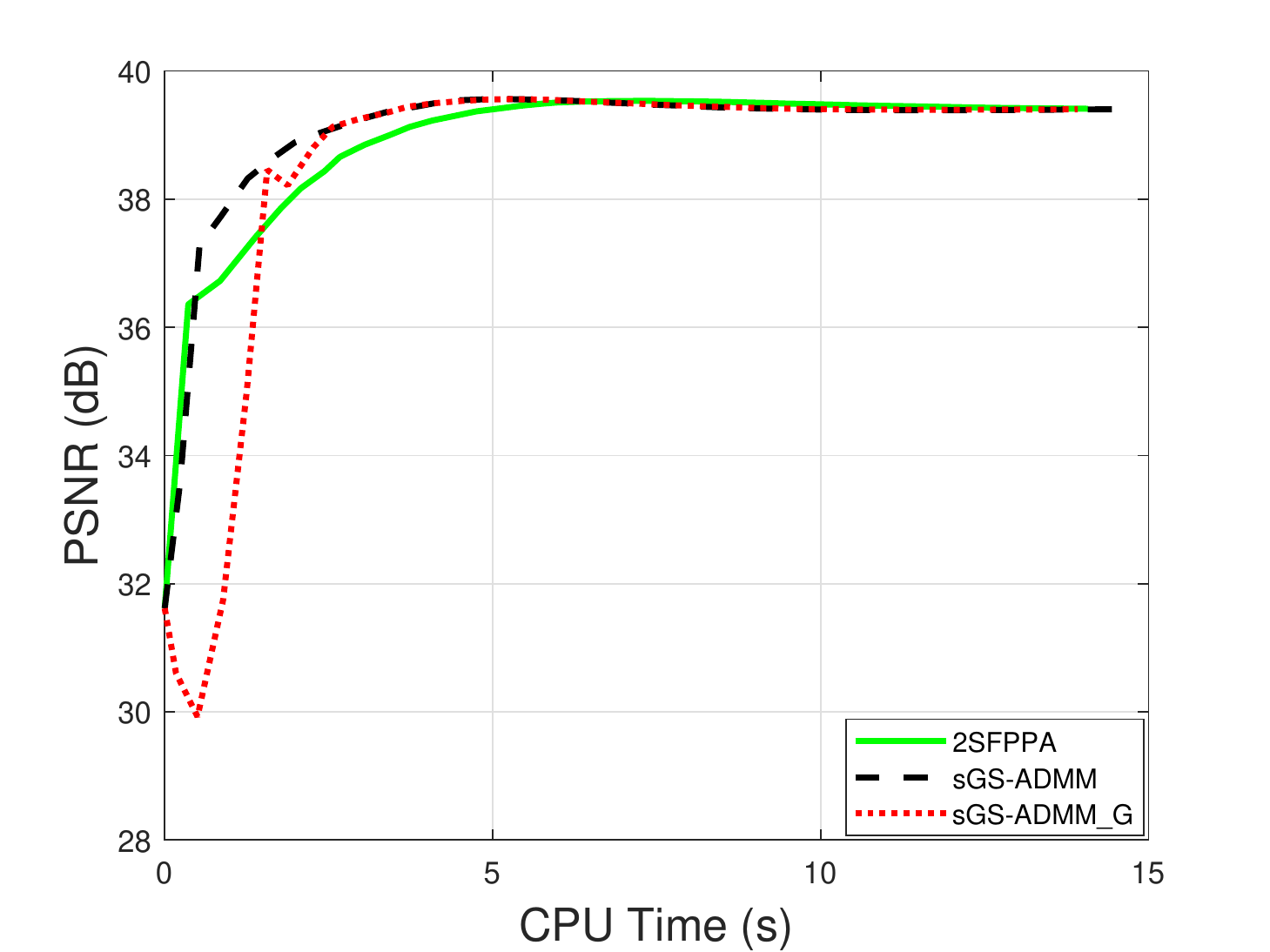}}
\subfigure[]{\includegraphics[width=0.3\textwidth,height=0.3\textwidth]{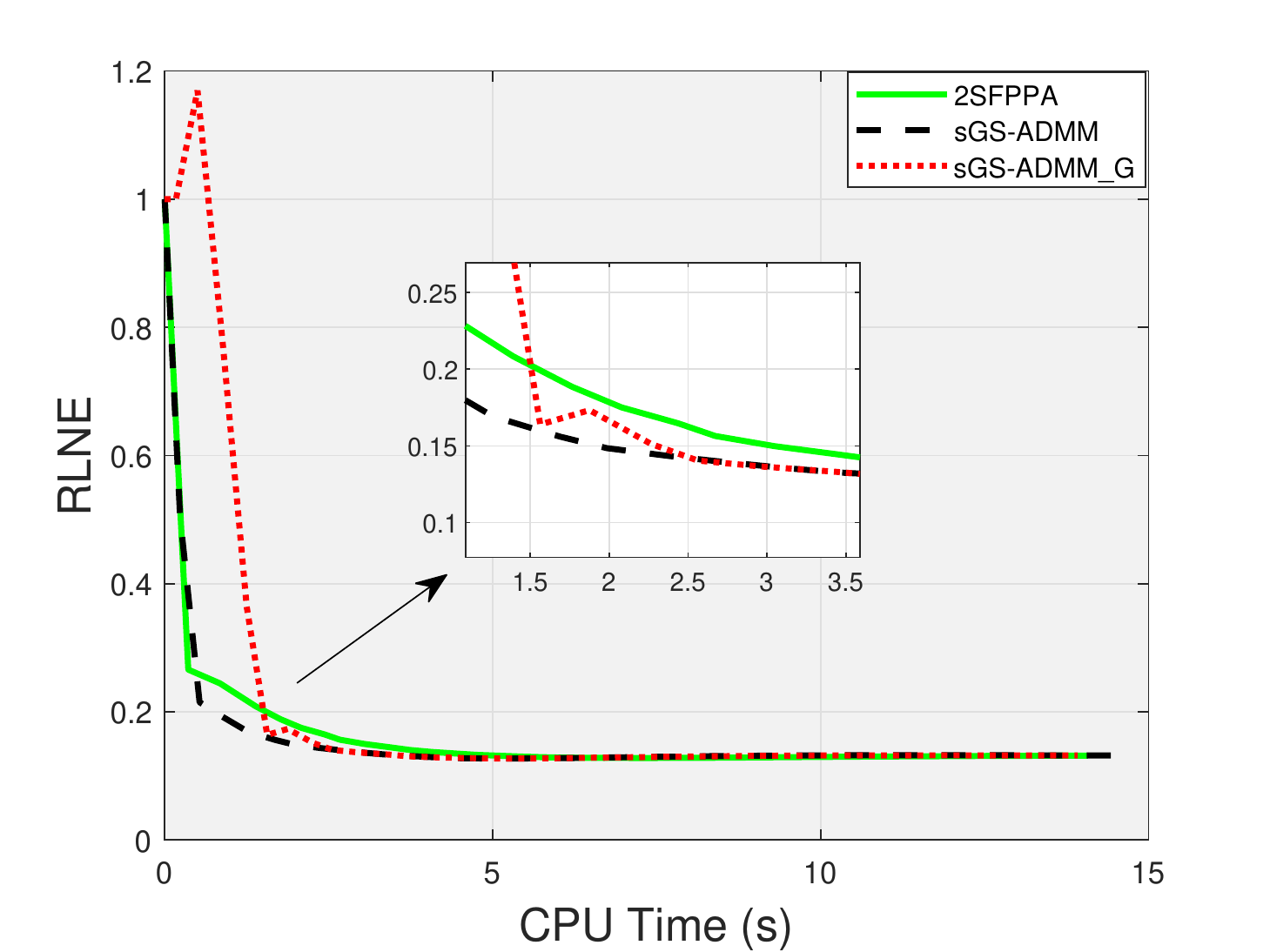}}
\subfigure[]{\includegraphics[width=0.3\textwidth,height=0.3\textwidth]{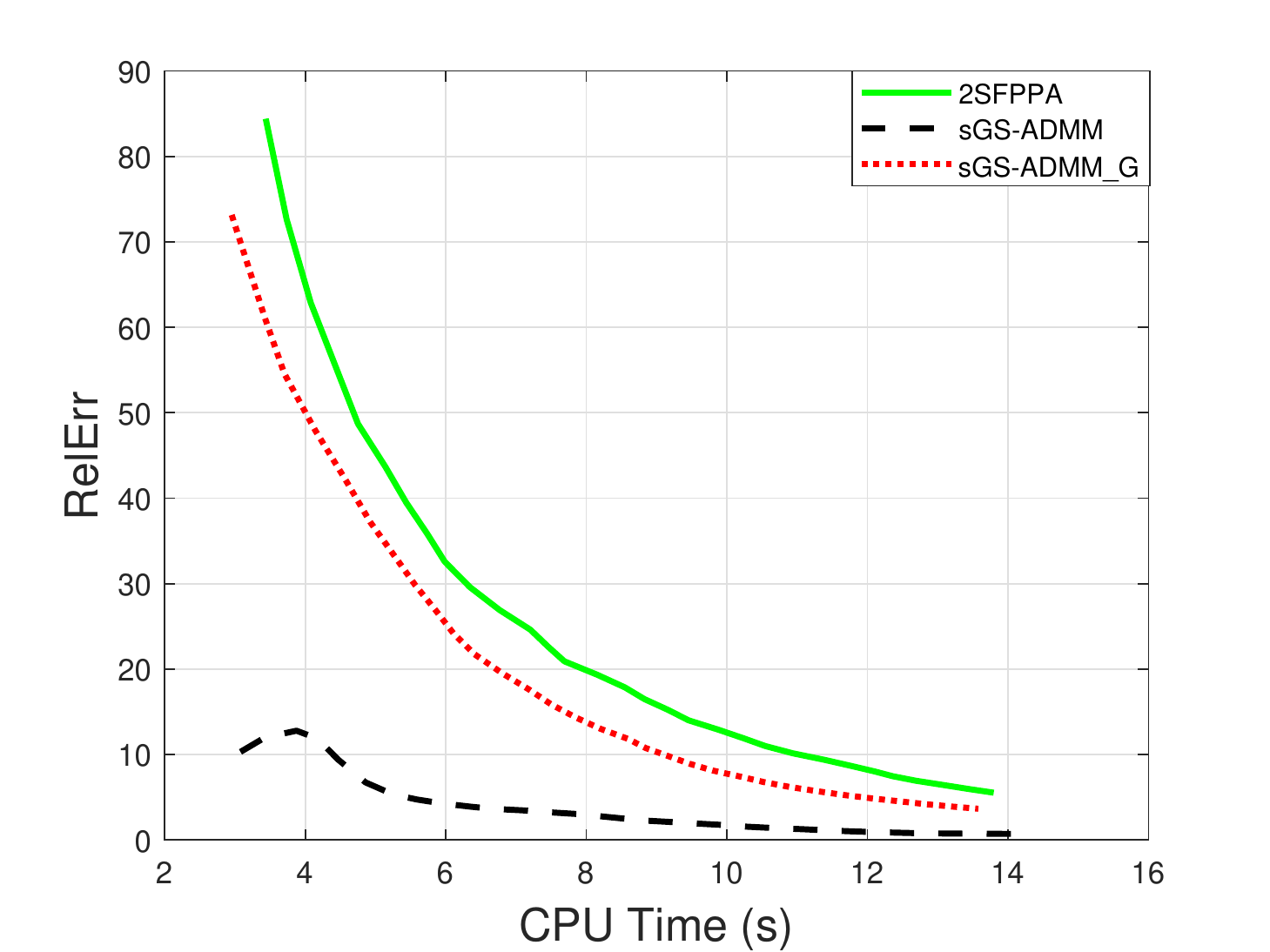}}\\
\subfigure[]{\includegraphics[width=0.3\textwidth,height=0.3\textwidth]{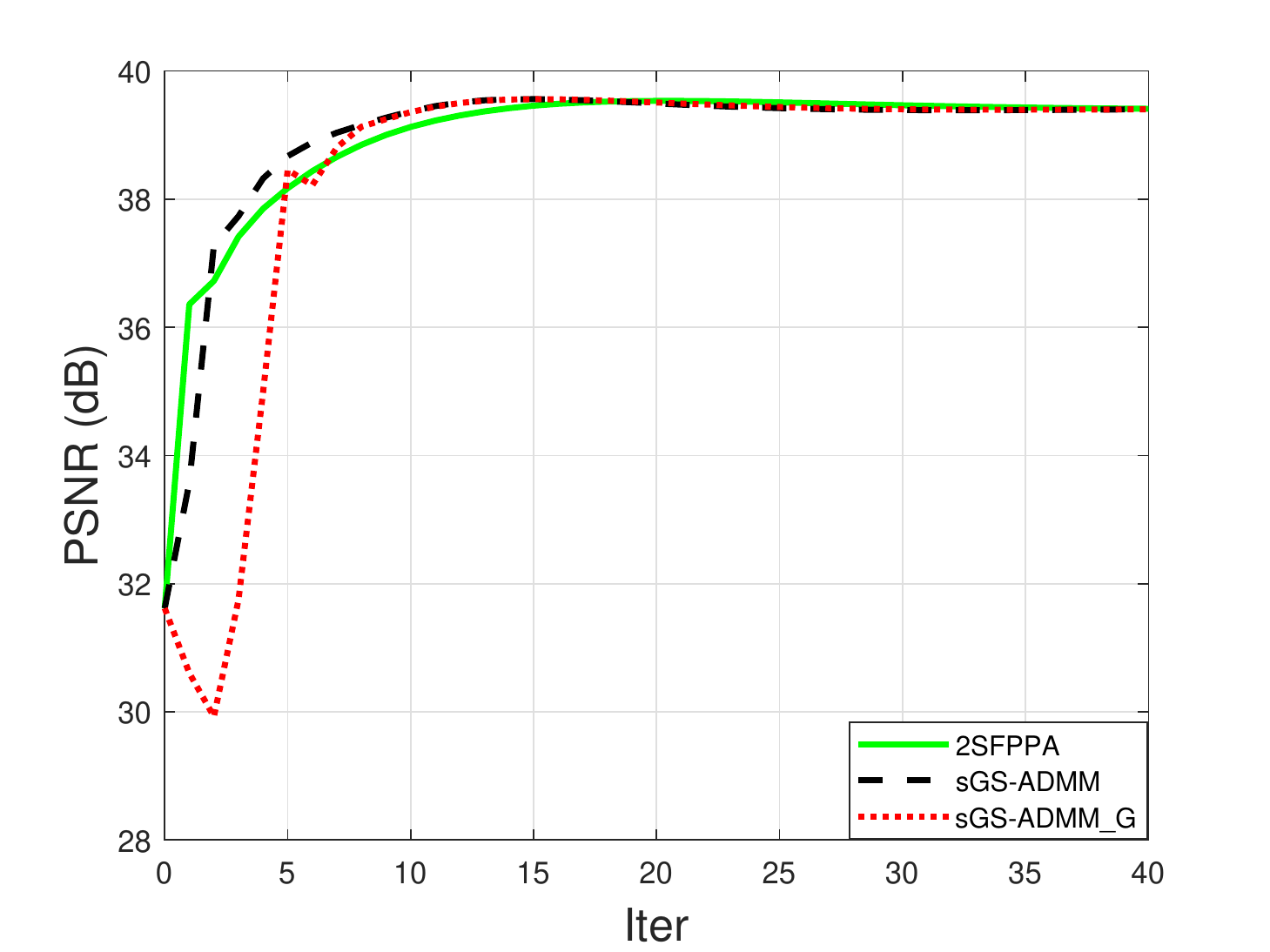}}
\subfigure[]{\includegraphics[width=0.3\textwidth,height=0.3\textwidth]{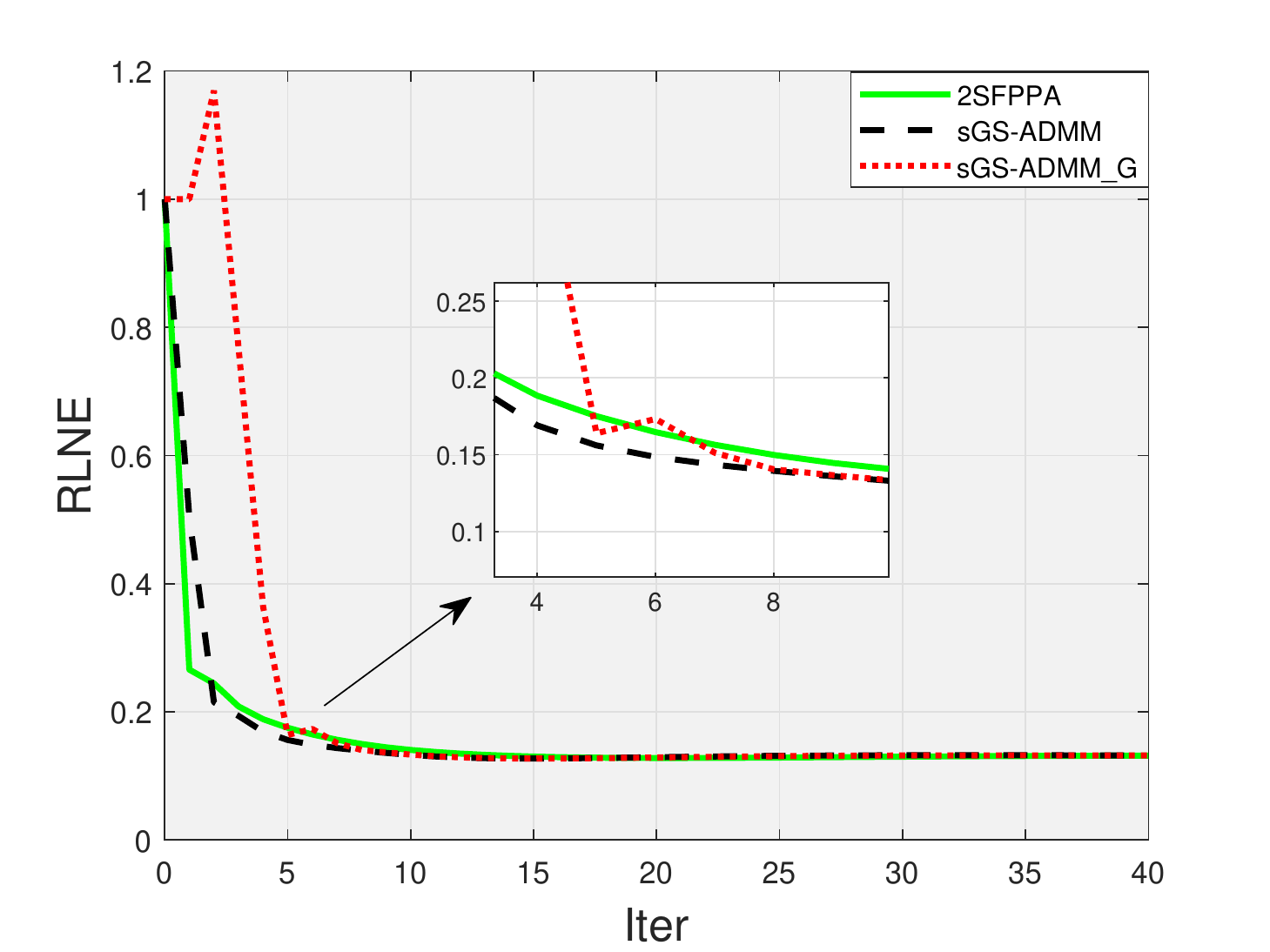}}
\subfigure[]{\includegraphics[width=0.3\textwidth,height=0.3\textwidth]{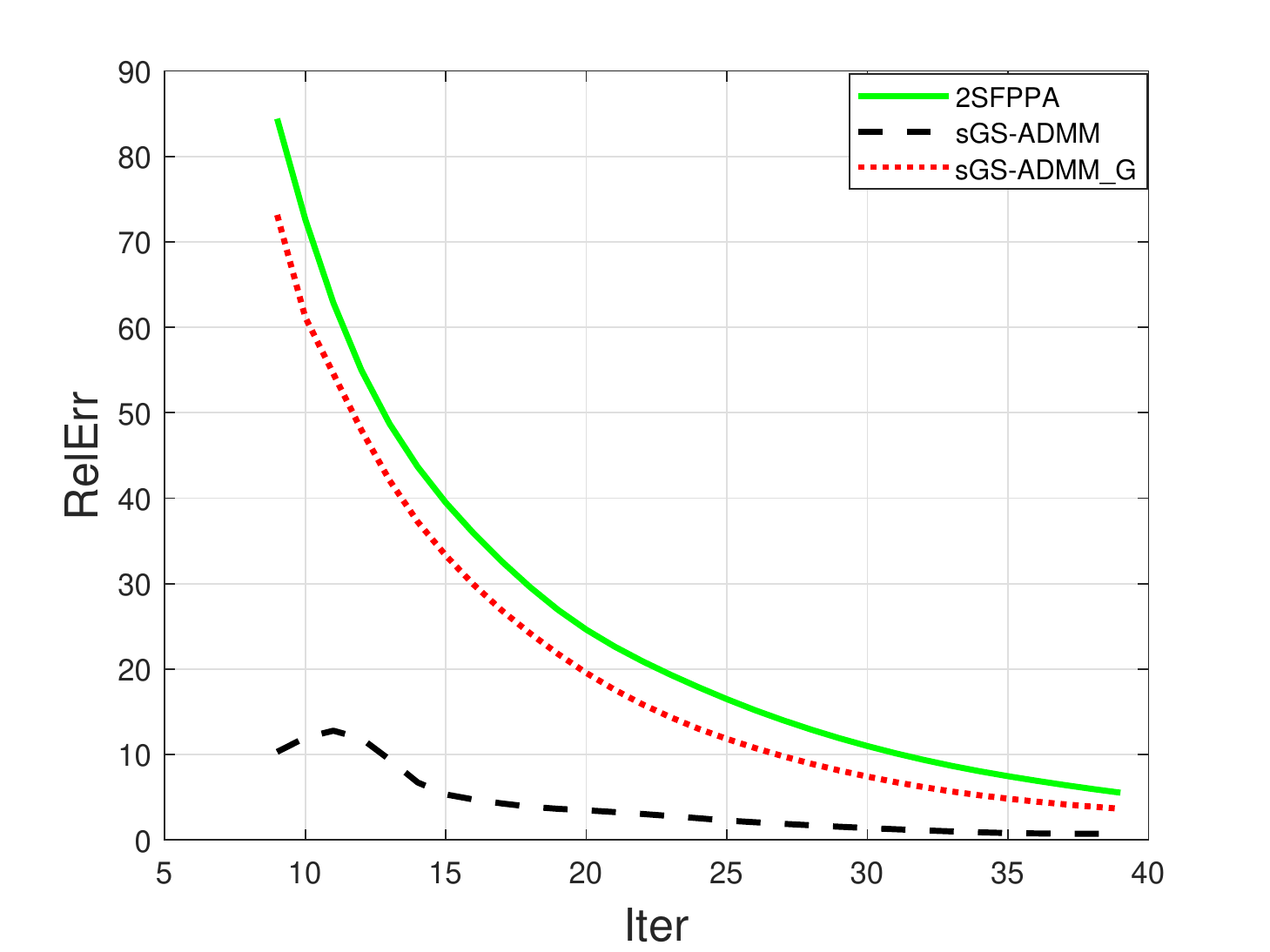}}
\end{tabular}
\end{center}
\caption{{\scriptsize Reconstruction results of the brian image with size $256\times 256$. (a) PSNR versus computational time;
(b) RLNE versus computational time; (c) RelErr versus computational time;
(d) PSNR versus number of iterations; (e) RLNE versus number of iterations;
(f) RelErr versus number of iterations.}}
\label{fig53}
\end{figure}
\begin{figure}[h!]
\begin{center}
\begin{tabular}{c}
\subfigure[]{\includegraphics[width=0.3\textwidth,height=0.3\textwidth]{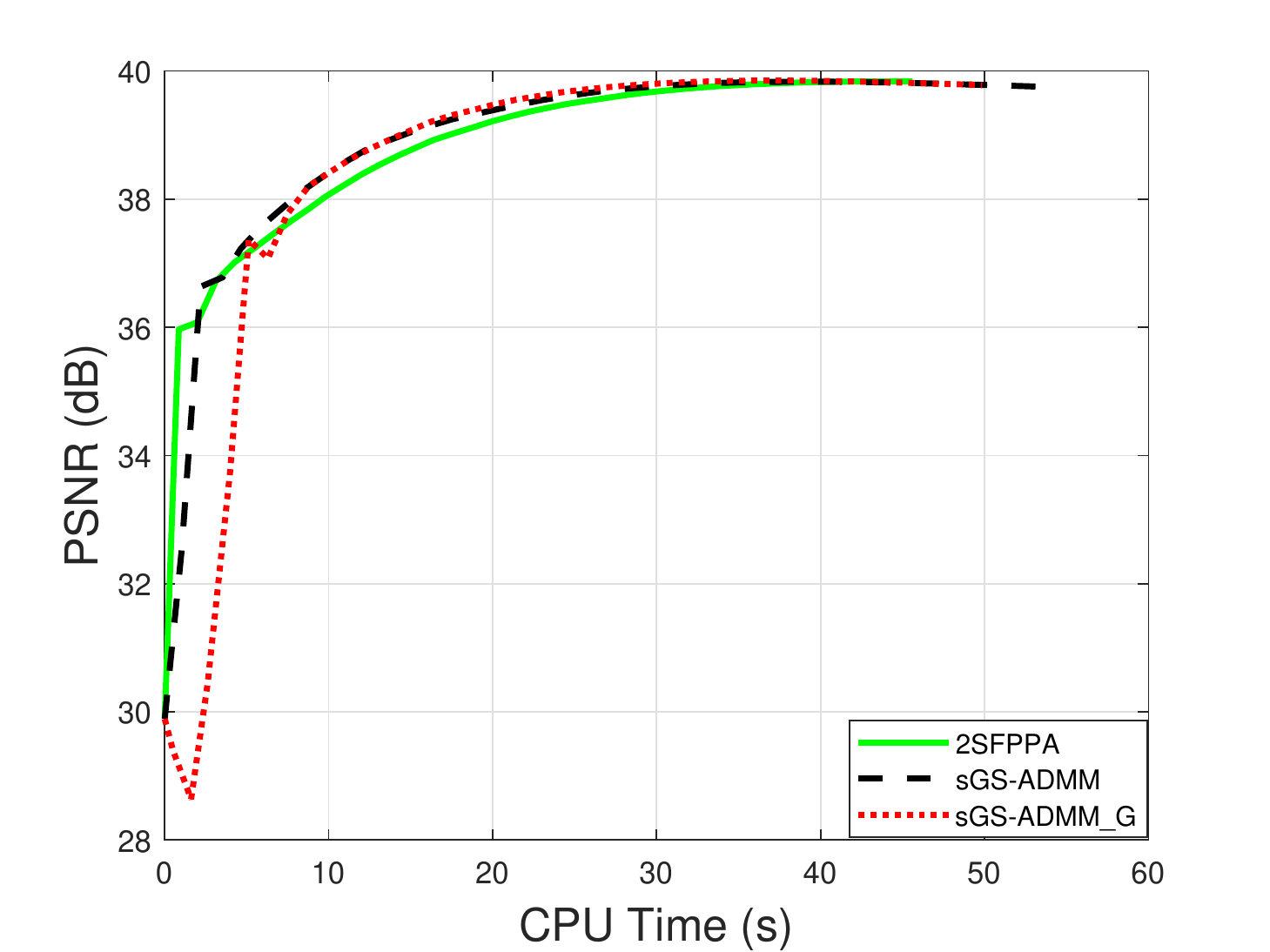}}
\subfigure[]{\includegraphics[width=0.3\textwidth,height=0.3\textwidth]{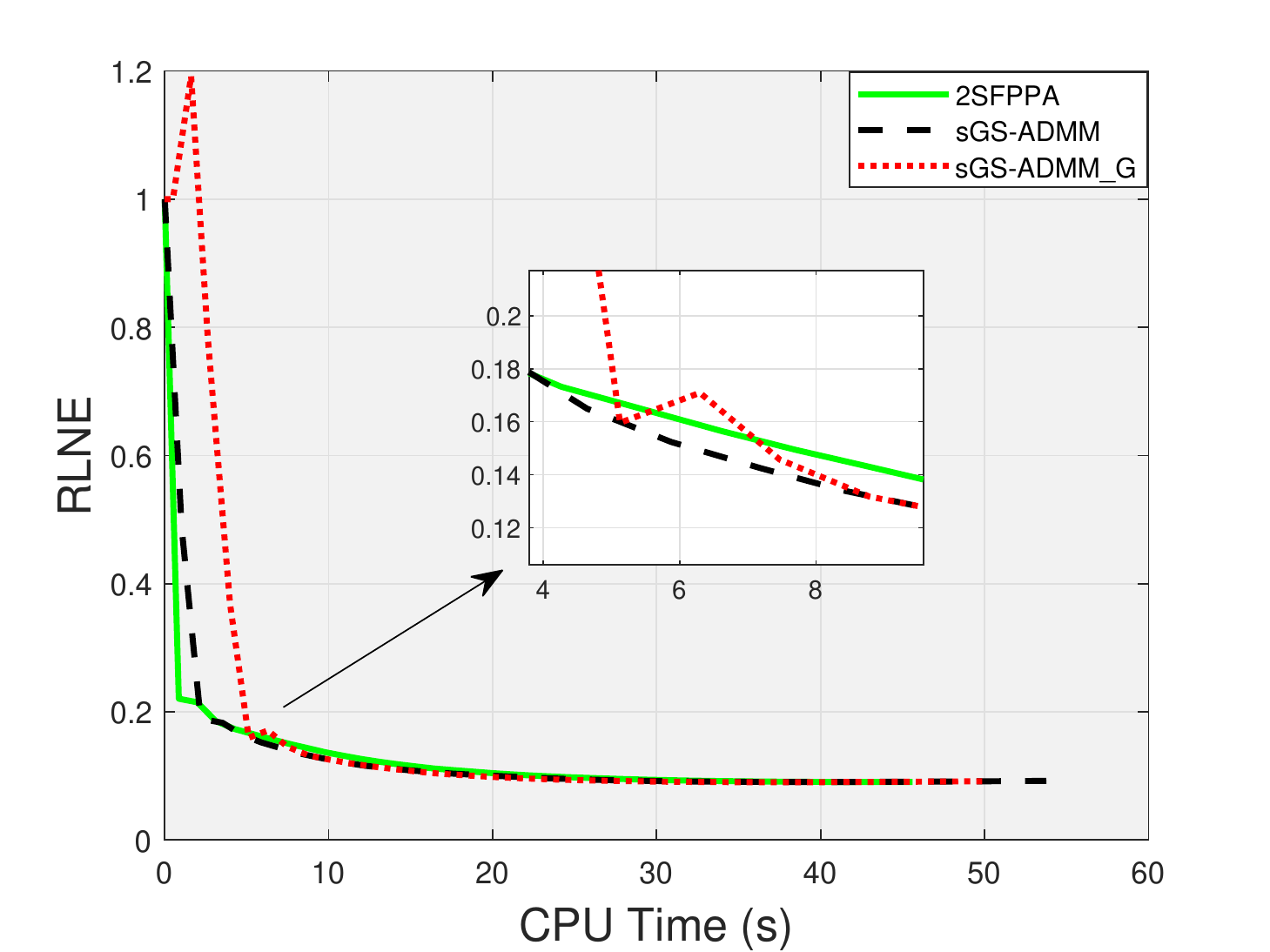}}
\subfigure[]{\includegraphics[width=0.3\textwidth,height=0.3\textwidth]{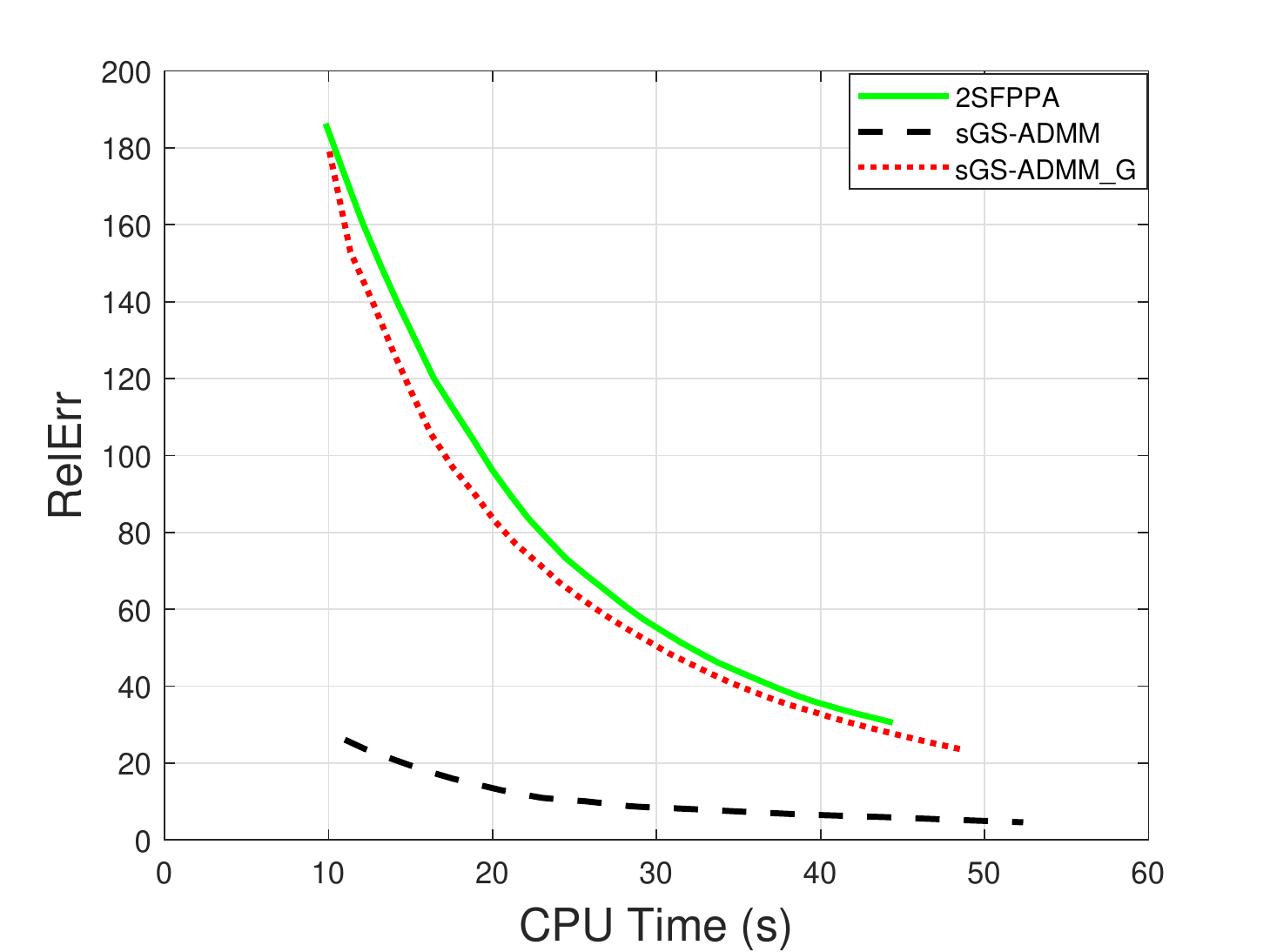}}\\
\subfigure[]{\includegraphics[width=0.3\textwidth,height=0.3\textwidth]{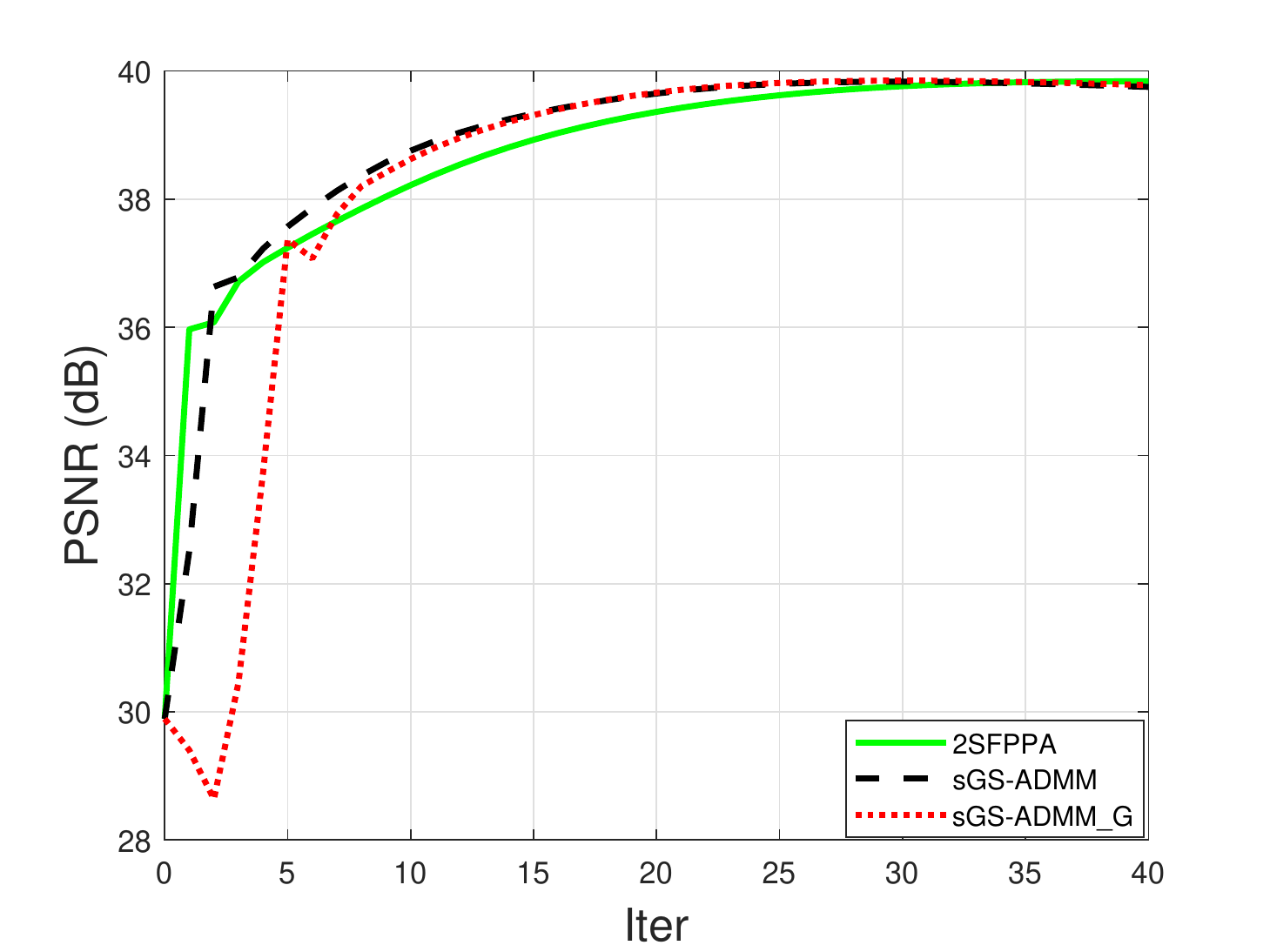}}
\subfigure[]{\includegraphics[width=0.3\textwidth,height=0.3\textwidth]{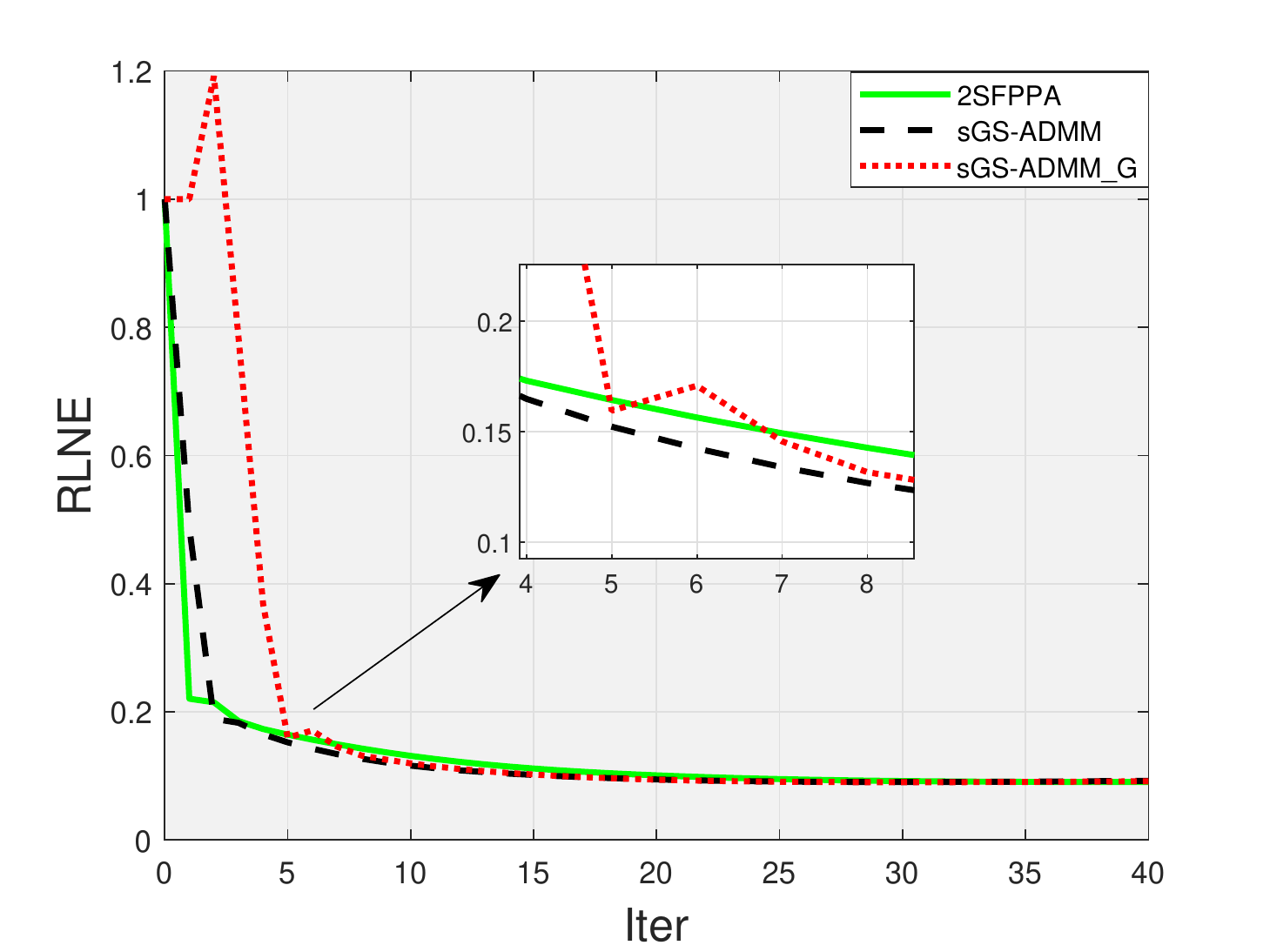}}
\subfigure[]{\includegraphics[width=0.3\textwidth,height=0.3\textwidth]{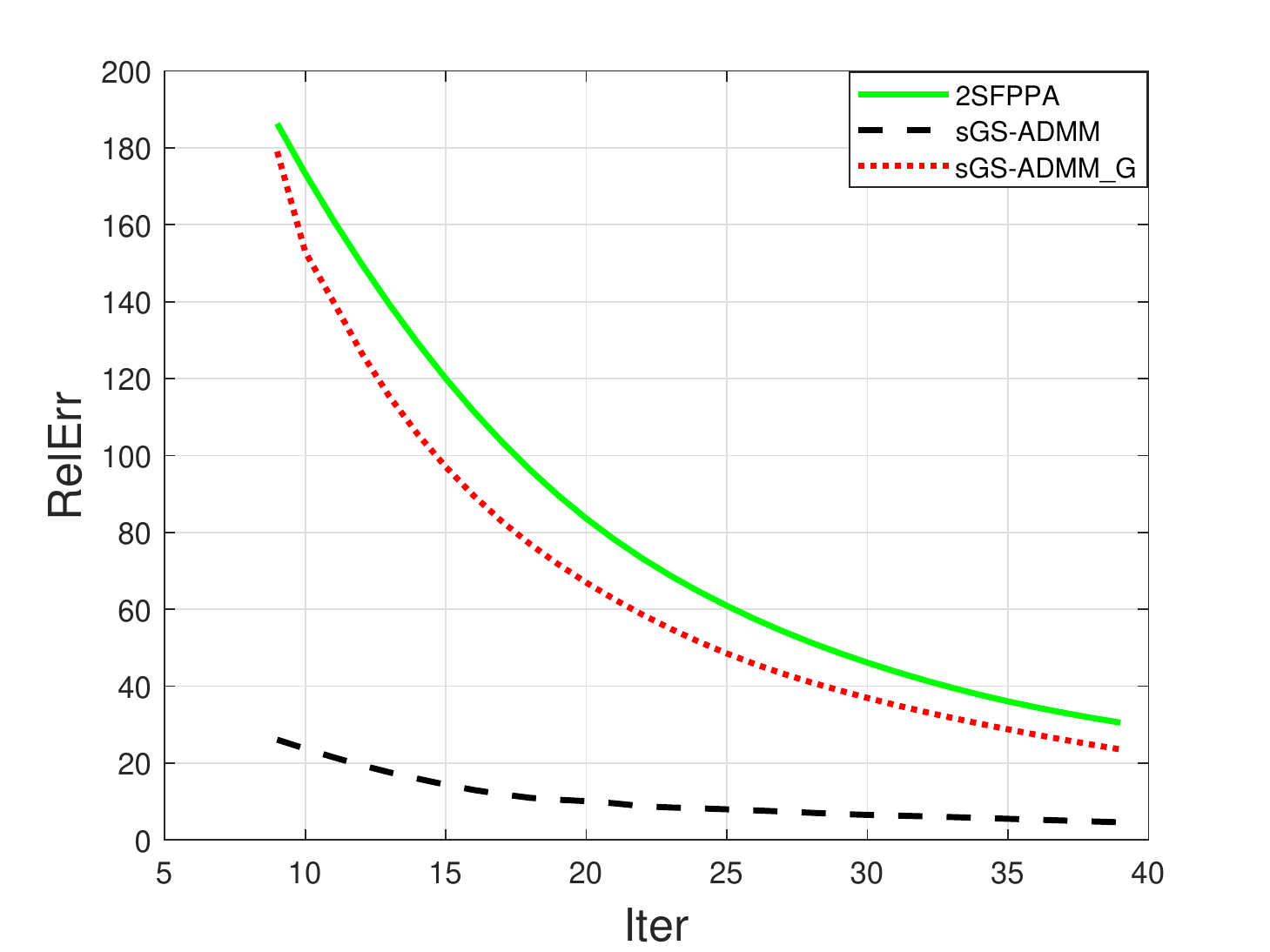}}
\end{tabular}
\end{center}
\caption{{\scriptsize Reconstruction results of the brian image with size $512 \times 512$. (a) PSNR versus computational time;
(b) RLNE versus computational time;
(c) RelErr versus computational time;
(d) PSNR versus number of iterations; (e) RLNE versus number of iterations;
(f) RelErr versus number of iterations.}}
\label{fig54}
\end{figure}

\subsection{Experiments using other undersampling patterns}

It is well known that the undersampling patterns are  very important to reduce reconstruction errors in MRI. In this part, we
aim to show that capabilities of sGS-ADMM and sGS-ADMM\_G are not limited to the pseudo radial sampling as tested previously,
which also suitable for other undersampling patterns.
In this subsection, we conduct experiments based on two typical patters named the Cartesian sampling by random phase
encoding of sampling rate $32.81\%$ and 2D random sampling  of sampling rate $30\%$ listed at the left and the right of Figure \ref{fig6}, respectively.
Besides, we test on six frequently used MR images in the literature with size $256\times 256$ as shown in Figure \ref{fig7}.
Moreover, the Gaussian noise as same as the aforementioned one is considered in these experiments. The numerical
results derived by each algorithm with regarding to the final RLNE, RelErr, and PSNR values within $40$ iterations are summarized in Table \ref{tab2}.
\begin{figure}[h]
\begin{center}
\begin{tabular}{c}
\subfigure[]{\includegraphics[width=0.3\textwidth,height=0.3\textwidth]{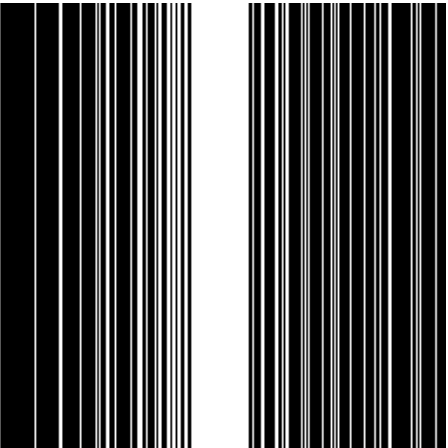}}
\subfigure[]{\includegraphics[width=0.3\textwidth,height=0.3\textwidth]{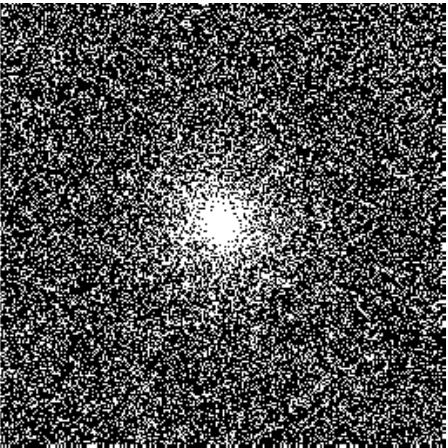}}
\end{tabular}
\end{center}
\caption{{\scriptsize Sampling patterns to be used. (a) Cartesian sampling pattern with $32.81\%$ data being sampled; (b) 2D random sampling pattern with $30\%$ data being sampled.}}
\label{fig6}
\end{figure}

\begin{figure}[h]
\begin{center}
\begin{tabular}{c}
\subfigure[]{\includegraphics[width=0.3\textwidth,height=0.3\textwidth]{phantom_1.pdf}}
\subfigure[]{\includegraphics[width=0.3\textwidth,height=0.3\textwidth]{brain1_1.pdf}}
\subfigure[]{\includegraphics[width=0.3\textwidth,height=0.3\textwidth]{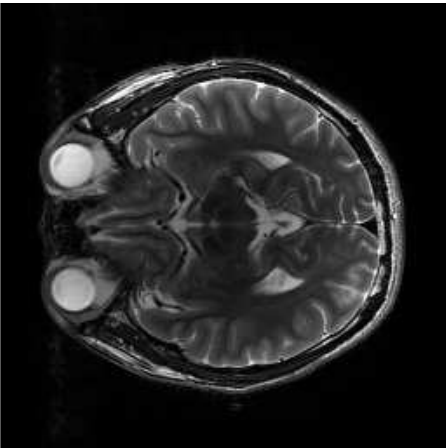}}\\
\subfigure[]{\includegraphics[width=0.3\textwidth,height=0.3\textwidth]{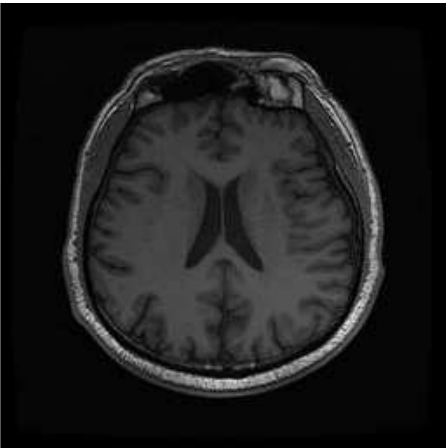}}
\subfigure[]{\includegraphics[width=0.3\textwidth,height=0.3\textwidth]{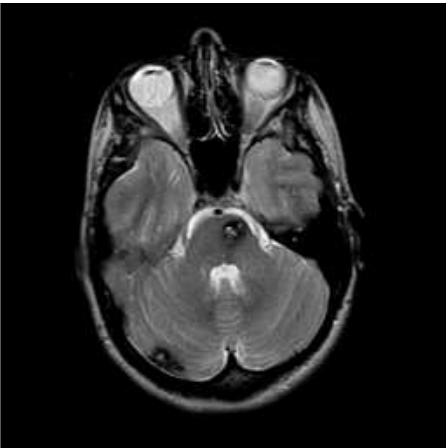}}
\subfigure[]{\includegraphics[width=0.3\textwidth,height=0.3\textwidth]{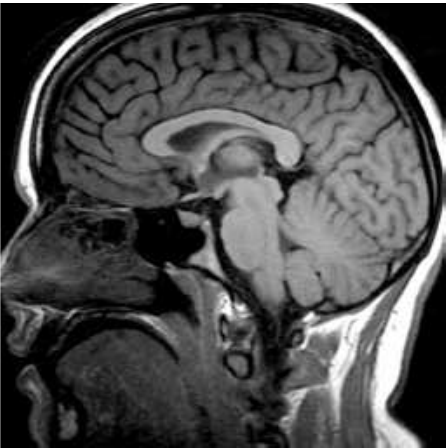}}
\end{tabular}
\end{center}
\caption{\scriptsize  MR images to be tested with two different sampling patterns illustrated in Figure \ref{fig6}.}
\label{fig7}
\end{figure}

From this table, we observe that the three algorithms can effectively reconstruct these $9$ kinds of MR images with different undersampling patterns. More precisely,
the RLNE and RelErr values derived by sGS-ADMM and sGS-ADMM\_G are always smaller than the one derived by 2SFPPA, which indicates that
sGS-ADMM and sGS-ADMM\_G always produce higher quality resolutions and in turn demonstrates that both presented algorithms are the winner in
reconstructing MR images.

\begin{table}[h]
\centering {\scriptsize\caption{Numerical results of all the algorithms under other two different samplings}
\begin{tabular}{cc | ccc | ccc | ccc }
\hline
\multicolumn{2}{c|}{} & \multicolumn{3}{c|}{2SFPPA} & \multicolumn{3}{c|}{sGS-ADMM} & \multicolumn{3}{c}{sGS-ADMM\_G}\\
\hline
Data&mask&RLNE&PSNR&RelErr&RLNE&PSNR&RelErr&RLNE&PSNR&RelErr\\
\hline
\multirow{2}{0.8cm}{Figure 9(a)}
&Cartesian &0.0442&46.8&23.7 &0.0465&46.1 &20.7 &0.0411&47.0&18.7\\
&2D random &0.0628&47.1&15.0&0.0677&46.6&13.7&0.0573&47.5&13.3\\
\hline
\multirow{2}{0.8cm}{Figure 9(b)}
&Cartesian&0.2044&42.0&3.02&0.2012&42.1&2.42&0.2034&42.1&1.77\\
&2D random&0.1685&45.2&3.06&0.1637&45.4&2.37&0.1662&45.3&1.82\\
\hline
\multirow{2}{0.8cm}{Figure 9(c)}
&Cartesian&0.1668&44.1&2.54&0.1644&44.2&2.42&0.1660&44.1&1.76\\
&2D random&0.1633&45.8&2.89&0.1606&45.9&2.51&0.1616&45.9&1.95\\
\hline
\multirow{2}{0.8cm}{Figure 9(d)}
&Cartesian&0.1166&47.0&5.50&0.1168&46.7&3.68&0.1166&47.0&3.13 \\
&2D random&0.1164&48.1&3.98&0.1179&48.1&3.71&0.1162&48.2&3.12\\
\hline
\multirow{2}{0.8cm}{Figure 9(e)}
&Cartesian&0.1220&44.1&4.45&0.1192&44.3&3.13&0.1208&44.2&2.68\\
&2D random&0.1068&47.1&3.72&0.1044&47.2&3.28&0.1047&47.2&2.71\\
\hline
\multirow{2}{0.8cm}{Figure 9(f)}
&Cartesian&0.1207&42.1&3.02&0.1180&42.3&2.66&0.1202&42.2&1.75\\
&2D random&0.1400&43.4&3.70&0.1364&43.5&2.06&0.1388&43.4&1.93\\
\hline
\end{tabular}\label{tab2}
}
\end{table}

\section{Conclusions}\label{finsec}

The extensive applications of compressed sensing MRI in clinical diagnosis has attracted much attention by many experts and scholars.
It is generally believed in this communities that an advanced reconstruction algorithm plays a crucial rule in decreasing the acquisition time.
In this paper, we took a dual approach to present a couple of efficient reconstruction algorithms for minimizing the sum of an $\ell_1$-norm of wavelet transformation term and TV regularization term. A series of numerical results on simulated phantom data and real brain imaging data with different undersampling patterns demonstrated the superior performance of sGS-ADMM and sGSADMM\_G over the state-of-the-art solver 2SFPPA. The successes of both algorithms mainly lied in the successful using of the novel sGS
technique which skillfully overcame the nonconvergent defect of traditional ADMM according to a very slightly computing cost.
With the attractive theoretical properties and the encouraging numerical performance, we believe that the sGS based
ADMMs should have more potential applications in the filed of CS in the near future.

\section*{Acknowledgements}
The work of Y. Xiao is supported by the National Natural Science Foundation of China (Grant No. 11971149).
The work of H. Zhang is supported by the National Natural Science Foundation of
China (Grant No. 11771003).

\section*{References}
\bibliography{mrisgs}

\end{document}